\titleformat{\subsection}[runin]
{\bfseries} {\thesubsection{.}}{0.15cm}{}[.]
\titleformat{\subsubsection}[runin]
{\em}{\thesubsubsection{.}}{0.15cm}{}[.]
\newtheorem{theorem}{Theorem}[section]
\newtheorem{proposition}[theorem]{Proposition}
\newtheorem{lemma}[theorem]{Lemma}
\newtheorem{corollary}[theorem]{Corollary}
\theoremstyle{definition}
\newtheorem{remark}[theorem]{Remark}
\numberwithin{equation}{section}
\numberwithin{figure}{section}
\newcommand\Bcal{\mathcal{B}}
\newcommand\Lcal{\mathcal{L}}
\newcommand\Pcal{\mathcal{P}}
\newcommand\Ascr{\mathscr{A}}
\newcommand\Cscr{\mathscr{C}}
\newcommand\Iscr{\mathscr{I}}
\newcommand\Lscr{\mathscr{L}}
\newcommand\Oscr{\mathscr{O}}
\newcommand\C{\mathbb{C}}
\newcommand\D{\mathbb{D}}
\newcommand\N{\mathbb{N}}
\newcommand\R{\mathbb{R}}
\renewcommand\S{\mathbb{S}}
\newcommand\Z{\mathbb{Z}}
\renewcommand\span{\mathrm{span}}
\newcommand\wt{\widetilde}
\newcommand\di{\partial}
\newcommand\hra{\hookrightarrow}
\newcommand\longhookrightarrow{\ensuremath{\lhook\joinrel\relbar\joinrel\rightarrow}}
\newcommand\lra{\longrightarrow}
\newcommand\supp{\mathrm{supp}}
\newcommand\Qcal{\mathcal{Q}}
\begin{document}

\fancyhead[LO]{The Oka principle for holomorphic Legendrian curves in $\C^{2n+1}$}
\fancyhead[RE]{Franc Forstneri\v c and Finnur L\'arusson}
\fancyhead[RO,LE]{\thepage}

\thispagestyle{empty}

\vspace*{7mm}
\begin{center}
{\bf \LARGE The Oka principle \\  \vspace{2mm} for holomorphic Legendrian curves in $\C^{2n+1}$} 
\vspace*{5mm}

{\large\bf Franc Forstneri\v c and Finnur L\'arusson}
\end{center}

\vspace*{7mm}

\begin{quote}
{\small
\noindent {\bf Abstract}\hspace*{0.1cm}
Let  $M$ be a connected open Riemann surface. We prove that the space $\Lscr(M,\C^{2n+1})$ of all holomorphic Legendrian immersions of $M$ to $\C^{2n+1}$, $n\geq 1$, endowed with the standard holomorphic contact structure, 
is weakly homotopy equivalent to the space $\Cscr(M,\S^{4n-1})$ of continuous maps from $M$
to the sphere $\S^{4n-1}$. If $M$ has finite topological type, then these spaces  are homotopy equivalent.  We determine the homotopy groups of $\Lscr(M,\C^{2n+1})$ in terms of the homotopy groups of $\S^{4n-1}$.
It follows that $\Lscr(M,\C^{2n+1})$ is $(4n-3)$-connected. 

\vspace*{0.1cm}
\noindent{\bf Keywords}\hspace*{0.1cm} Riemann surface, Legendrian curve, Oka principle, absolute neighborhood retract 

\vspace*{0.1cm}

\noindent{\bf MSC (2010)}\hspace*{0.1cm} 53D10, 32E30, 32H02, 57R17
\vspace*{0.1cm}

\noindent{\bf Date}\hspace*{0.1cm} 6 November 2016; this version 15 May 2017}
\end{quote}

%
%
%
%
\section{Introduction}\label{sec:intro}
It is an interesting and important problem to describe the rough shape of mapping spaces 
that arise in analysis and geometry. Answering such a question typically amounts
to proving a {\em homotopy principle} (h-principle) to the effect that analytic solutions
can be classified by topological data; in particular, a solution exists in the absence of topological obstructions.
For a survey of the h-principle and its applications, see  
the monographs by Gromov \cite{Gromov1986book}, Eliashberg and Mishachev
\cite{EliashbergMishachev2002}, and Spring \cite{Spring2010}. In complex analysis,
a synonym for h-principle is {\em Oka principle}. This is a subject
with a long and rich history going back to Oka's paper \cite{Oka1939} 
in 1939; we refer to the monograph \cite{Forstneric2011}. 

In this paper, we describe the rough shape of the space $\Lscr(M,\C^{2n+1})$ of 
holomorphic Legendrian immersions of an open Riemann surface $M$ into the complex 
Euclidean space $\C^{2n+1}$, $n\geq 1$, with the standard holomorphic contact structure \eqref{eq:alpha}. 
Our main result is that $\Lscr(M,\C^{2n+1})$ is weakly homotopy equivalent to the space
$\Cscr(M,\S^{4n-1})$ of continuous maps from $M$ to the $(4n-1)$-dimensional sphere,
and is homotopy equivalent to it if $M$ has finite topological type;
see Corollary \ref{cor:whe}. Analogous results for several other mappings spaces
were obtained in \cite{ForstnericLarusson2016}.

We begin by introducing the relevant spaces of maps. All spaces under
consideration are endowed with the compact-open topology, 
unless otherwise specified.

A holomorphic $1$-form $\alpha$ on a complex manifold $X$ of odd dimension $2n+1\ge 3$ 
is said to be a {\em contact form} if it satisfies the nondegeneracy condition
$\alpha \wedge(d\alpha)^n \neq 0$ at every point of $X$. 
The model is the complex Euclidean space $\C^{2n+1}$ with coordinates
\begin{equation}\label{eq:coord}
	x=(x_1,\ldots,x_n)\in\C^n,\quad y=(y_1,\ldots,y_n)\in\C^n, \quad z\in\C,
\end{equation}
and $\alpha$ the standard contact form
\begin{equation}\label{eq:alpha}
	\alpha = dz + \sum_{j=1}^n x_j \, dy_j.
\end{equation}
By Darboux's theorem, every holomorphic contact form on a $(2n+1)$-dimensional complex manifold
is given by \eqref{eq:alpha} in some local holomorphic coordinates at each point
(see \cite[Theorem A.2]{AlarconForstnericLopez2016Legendrian}; 
for the smooth case, see e.g.\ \cite[Theorem 2.5.1]{Geiges2008CUP}).

A smooth map $F\colon M\to \C^{2n+1}$ from a smooth manifold $M$
is said to be {\em Legendrian} if $F^*\alpha=0$ on $M$. It is an elementary observation 
that every smooth Legendrian surface in a $3$-dimensional complex contact manifold is a 
complex curve; see Proposition \ref{prop:complex}.

Let $M$ be a connected open Riemann surface.  Denote by $\Iscr(M,\C^{2n})$ the space of all holomorphic immersions $M\to\C^{2n}$, and consider the closed subspace
\[
	\Iscr_*(M,\C^{2n}) = \bigl\{ (x,y)\in \Iscr(M,\C^{2n}) : xdy=  \sum_{j=1}^n x_j \, dy_j \ \ 
	\text{is an exact $1$-form on}\ M\bigr\}.
\]
Elements of $\Iscr_*(M,\C^{2n})$ will be called {\em exact holomorphic immersions}. Let
\begin{equation}\label{eq:incl}
	 \Iscr_*(M,\C^{2n}) \stackrel{\phi}{\longhookrightarrow}   \Iscr(M,\C^{2n})
\end{equation}
be the inclusion. Note that the map 
\[
	\Lscr(M,\C^{2n+1}) \longrightarrow  \Iscr_*(M,\C^{2n}) \times\C,
\]
given for a fixed choice of a base point $u_0\in M$ by 
\begin{equation}\label{eq:homeo}
	\Lscr(M,\C^{2n+1}) \ni (x,y,z) \longmapsto (x,y,z(u_0)) \in \Iscr_*(M,\C^{2n}) \times\C,
\end{equation}
is a homeomorphism. This follows immediately from the formula 
\begin{equation}\label{eq:z-component}
	z(u)=z(u_0)-\int_{u_0}^u xdy,\quad u\in M,
\end{equation}
which holds for any Legendrian immersion
$(x,y,z)\in \Lscr(M,\C^{2n+1})$, observing also that the integral $\int_{u_0}^u xdy$ is independent 
of the choice of a path from $u_0$ to $u$ (and hence defines a Legendrian immersion 
by the above formula) if and only if $(x,y)\in  \Iscr_*(M,\C^{2n})$. It follows 
that the projection $\pi\colon \Lscr(M,\C^{2n+1}) \to \Iscr_*(M,\C^{2n})$ is a homotopy equivalence.

Fix a nowhere vanishing holomorphic $1$-form $\theta$ on $M$; such exists by the 
Oka-Grauert principle \cite[Theorem 5.3.1]{Forstneric2011}. The specific  choice of $\theta$
will be irrelevant. For every immersion $\sigma \in \Iscr(M,\C^{2n})$, the map
$d\sigma/\theta\colon M\to \C^{2n}$ is holomorphic and it avoids the origin $0\in\C^{2n}$. 
The correspondence $\sigma \mapsto d\sigma/\theta$  defines a continuous map
\[  \varphi : \Iscr(M,\C^{2n}) \longrightarrow \Oscr(M,\C^{2n}_*). \]
Here, $\C^{2n}_*=\C^{2n}\setminus\{0\}$.  By \cite[Theorem 1.4]{ForstnericLarusson2016},  $\varphi$ is a weak 
homotopy equivalence, and a homotopy equivalence if $M$ has finite topological type. 

Let $\iota\colon\Oscr(M,\C^{2n}_*)\hookrightarrow \Cscr(M,\C^{2n}_*)$ denote the
inclusion of the space of holomorphic maps into the space of continuous maps. 
Since $\C^{2n}_*$ is a homogeneous space of the complex Lie group $GL_{2n}(\C)$,
$\iota$ is a weak homotopy equivalence by the Oka-Grauert principle \cite[Theorem 5.3.2]{Forstneric2011};  
if $M$ has finite topological type, then $\iota$ is a homotopy equivalence \cite{Larusson2015PAMS}.

Finally, the radial projection $\C^{2n}_*\to \S^{4n-1}$  onto the unit sphere 
induces a homotopy equivalence $\tau\colon \Cscr(M,\C^{2n}_*)\to \Cscr(M,\S^{4n-1})$. 

In summary, all the maps in the following sequence except $\phi$
are known to be weak homotopy equivalences, and to be homotopy 
equivalences when $M$ has finite topological type:
\begin{multline} \label{eq:fivemaps}
	\Lscr(M,\C^{2n+1})  \stackrel{\pi}{\longrightarrow} \Iscr_*(M,\C^{2n}) 
	\stackrel{\phi}{\longhookrightarrow} \Iscr(M,\C^{2n}) 
	 \stackrel{\varphi}{\longrightarrow} \\
	 \stackrel{\varphi}{\longrightarrow}  \Oscr(M,\C^{2n}_*) 
	\stackrel{\iota}{\longhookrightarrow} \Cscr(M,\C^{2n}_*) \stackrel{\tau}{\longrightarrow} \Cscr(M,\S^{4n-1}).
\end{multline}

The following is our main result.

%
%
\begin{theorem}\label{th:immersions}
For every connected open Riemann surface $M$, the inclusion 
\[
	\Iscr_*(M,\C^{2n}) \longhookrightarrow \Iscr(M,\C^{2n})
\]
of the space of exact holomorphic immersions $M\to\C^{2n}$, $n\geq 1$, into the space of all holomorphic immersions
is a weak homotopy equivalence, and a homotopy equivalence if the surface $M$ has finite topological type.
\end{theorem}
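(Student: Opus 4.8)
The plan is to verify that the inclusion $\phi\colon\Iscr_*(M,\C^{2n})\hra\Iscr(M,\C^{2n})$ satisfies the parametric h-principle: every continuous family of holomorphic immersions $f_p\colon M\to\C^{2n}$, $p\in P$ (where $P$ is a compact parameter space, say a finite polyhedron, with a closed subpolyhedron $Q$ on which the immersions are already exact), can be deformed through holomorphic immersions, rel $Q$, to a family of \emph{exact} immersions. Together with the openness of $\Iscr(M,\C^{2n})$ in $\Oscr(M,\C^{2n})$ and standard arguments identifying the weak homotopy type of a subspace inclusion with such a parametric extension/deformation statement, this gives that $\phi$ is a weak homotopy equivalence; the upgrade to a genuine homotopy equivalence when $M$ has finite topological type follows because then both spaces have the homotopy type of CW complexes (they are absolute neighborhood retracts — this is presumably established elsewhere in the paper, cf.\ the abstract's keyword), and a weak homotopy equivalence between such spaces is a homotopy equivalence by Whitehead's theorem.

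The heart of the matter is the parametric deformation. Writing $f_p=(x_p,y_p)$ with $x_p=dx_p/\theta$-type derivatives understood, the obstruction to exactness of $x_p\,dy_p=\sum_j x_{p,j}\,dy_{p,j}$ is its \emph{period map} $p\mapsto\bigl(\int_{\gamma_i}x_p\,dy_p\bigr)_i\in\C^{\ell}$ over a basis $\gamma_1,\dots,\gamma_\ell$ (finite or countable) of $H_1(M;\Z)$; exactness on $M$ means all these periods vanish. The strategy is a \emph{period-dominating spray} argument: exhaust $M$ by an increasing sequence of smoothly bounded Runge domains (or compact sublevel sets of a strictly subharmonic exhaustion), and on each such piece construct a multi-parameter holomorphic perturbation $f_{p,t}$, $t\in\C^N$, of the given immersion — of the form obtained by integrating $\theta$ against a controlled modification of $df_p/\theta$ using functions supported in suitable annuli, as in the Gunning–Narasimhan and Alarcón–Forstnerič–López technique for Legendrian and directed curves — such that the differential in $t$ of the period map is surjective (\emph{period domination}) uniformly in $p$, while the immersion condition $df_{p,t}/\theta\neq 0$ is preserved. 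One then solves for the unique small $t=t(p)$ killing the current periods by the implicit function theorem, doing this parametrically and compatibly over $Q$ where nothing need change.

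Globalizing from the compact pieces to all of $M$ is the second ingredient: one runs an induction over the exhaustion, at each step correcting the newly appearing periods while preserving (approximately) the vanishing of the old ones, using Runge–Mergelyan approximation on Runge domains in $M$ to keep the corrections small and controlled so that the sequence of families $f_p^{(k)}$ converges locally uniformly to a limit family $\tilde f_p$ of holomorphic immersions with \emph{all} periods zero, i.e.\ $\tilde f_p\in\Iscr_*(M,\C^{2n})$, and with $\tilde f_p=f_p$ for $p\in Q$. Care is needed to arrange the convergence to hold not just pointwise in $p$ but in the compact-open topology on the space of families, and to see that the homotopy from $f_p$ to $\tilde f_p$ stays within holomorphic immersions; this is handled by interpolating the discrete corrections into a continuous path and invoking the same period-domination device along the path.

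I expect the main obstacle to be the \emph{uniform, parametric} construction of the period-dominating sprays over the possibly large or topologically complicated parameter space $P$ while simultaneously respecting the boundary condition on $Q$ — in particular, arranging that the spray and the solution $t(p)$ of the implicit-function problem depend continuously on $p$ and patch correctly across the stages of the exhaustion. The immersion constraint $df/\theta\in\C^{2n}_*$ is convex in the relevant sense (one is perturbing inside the complement of a point, which for $n\ge1$ has trivial low-dimensional homotopy, though here it is automatically preserved by smallness of perturbations), so it should not itself create obstructions; the real work is bookkeeping the periods parametrically. The case $n=1$, where $\C^{2n}_*=\C^2_*$ and the target sphere is $\S^3$, is already representative and contains the full difficulty.
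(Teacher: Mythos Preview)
Your outline captures the overall architecture correctly---parametric h-principle via a Morse exhaustion, period-dominating sprays combined with the implicit function theorem, and ANR theory for the upgrade to a genuine homotopy equivalence in the finite-type case. This is indeed the paper's strategy. However, there is a substantive gap in your inductive step.

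The exhaustion of $M$ by sublevel sets of a Morse function has two kinds of steps: \emph{noncritical} (no change in topology) and \emph{critical} (a $1$-handle is attached, producing a new generator of $H_1$). Your spray-plus-implicit-function-theorem mechanism handles the noncritical case cleanly: the only periods to be corrected are those introduced by Mergelyan approximation, hence small, and a small value of the spray parameter suffices while the immersion condition is preserved automatically. But in the critical case a new closed curve $C$ appears, and the period $\int_C x_p\,dy_p$ of the given immersion over $C$ can be \emph{arbitrarily large}; there is no reason for it to be close to zero. You cannot kill a large period with a small spray perturbation, and a large perturbation is not guaranteed to preserve the immersion condition. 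Your remark that the immersion constraint ``is automatically preserved by smallness of perturbations'' is exactly where the argument breaks down: smallness is not available at a critical step.

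The paper supplies a separate ingredient to close this gap: a parametric one-dimensional convex integration lemma (Lemma~\ref{lem:CI}). Before any holomorphic approximation on the larger domain, one extends the family of immersions \emph{smoothly} across the new arc $E=C\setminus \bar D_j$ and then deforms this smooth extension on $E$, rel endpoints and rel $Q$, through $\Cscr^1$ immersed arcs so as to force $\int_C x_p\,dy_p=0$ for the deformed family at $t=1$. Only after this does one apply Mergelyan approximation on the admissible set $\bar D_j\cup E$ and invoke the period-dominating spray, at which point all periods---old and new---are small and the implicit function theorem applies. This convex-integration step on the attaching arc is the idea missing from your proposal; without it the induction cannot cross critical levels.
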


Since a composition of (weak) homotopy equivalences is again a (weak) homotopy equivalence,
Theorem \ref{th:immersions} implies the following.

%
%
\begin{corollary}\label{cor:whe}
All the maps in the sequence \eqref{eq:fivemaps}, and compositions thereof, are
weak homotopy equivalences, and homotopy equivalences if $M$ has finite topological type.
This holds in particular for the map $\Lscr(M,\C^{2n+1})  \to  \Cscr(M,\S^{4n-1})$.
\end{corollary}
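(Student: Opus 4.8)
The plan is to prove that the inclusion $\phi\colon \Iscr_*(M,\C^{2n}) \hookrightarrow \Iscr(M,\C^{2n})$ is a weak homotopy equivalence by showing that for every $k\ge 0$ and every continuous map $g\colon \S^k\to \Iscr(M,\C^{2n})$ (respectively every relative map of the pair $(\D^{k+1},\S^k)$), $g$ can be deformed into $\Iscr_*(M,\C^{2n})$, rel the part that already lies there. A map from $\S^k$ into $\Iscr(M,\C^{2n})$ amounts to a holomorphic family $\{(x_s,y_s)\}_{s\in \S^k}$ of immersions $M\to\C^{2n}$, and the obstruction to lying in the exact subspace is the class of the holomorphic $1$-form $x_s\, dy_s = \sum_j x_{s,j}\, dy_{s,j}$ in $H^1(M;\C)$, which depends continuously on $s$. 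So the task is: given a continuous family of immersions whose periods $\bigl(\int_{\gamma} x_s\,dy_s\bigr)_{\gamma}$ may be nonzero, deform the family through immersions to one with all periods zero, without disturbing parameters where the periods already vanish.

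The key technical tool will be a \emph{period-killing} construction for families of holomorphic immersions into $\C^{2n}$, which is precisely the kind of result proved in \cite{ForstnericLarusson2016} and in the Legendrian context in \cite{AlarconForstnericLopez2016Legendrian}. First I would fix a finite set of smooth closed loops $\gamma_1,\dots,\gamma_\ell$ in $M$ generating $H_1(M;\Z)$ (when $M$ has finite topological type; in general one exhausts $M$ by such and takes a limit), and consider the \emph{period map} $\Pcal\colon \Iscr(M,\C^{2n})\to \C^\ell$, $\Pcal(x,y)=\bigl(\int_{\gamma_i} x\,dy\bigr)_{i=1}^\ell$, which is continuous; then $\Iscr_*(M,\C^{2n})=\Pcal^{-1}(0)$. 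The heart of the matter is to show $\Pcal$ is (up to homotopy) a trivial fibration, or at least that it admits enough local sections through families, so that any sphere mapping into $\Iscr$ can be pushed into the zero fiber. Concretely, for a fixed immersion $(x,y)$ one constructs a holomorphic spray $(x^t,y^t)$, $t$ in a ball in $\C^N$, of immersions with $(x^0,y^0)=(x,y)$, such that $t\mapsto \Pcal(x^t,y^t)$ is a submersion at $t=0$ onto $\C^\ell$; this is done by adding to $(x,y)$ suitable holomorphic corrections supported near the loops, in the style of the Mergelyan/Runge approximation arguments with controlled periods. Splicing these local sprays over a parameter space and using the dominating-spray formalism then lets one apply the parametric Oka-type argument to move $\Pcal\circ g$ continuously to $0$.

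The main obstacle I expect is the parametric and relative nature of the statement: one must carry out the period-killing not for a single immersion but for a compact family $\{(x_s,y_s)\}_{s\in\S^k}$, keeping the correction trivial at those $s$ where $\Pcal(x_s,y_s)$ is already $0$ (and on a closed subset, for the relative lifting needed for $\pi_k$-surjectivity versus injectivity). This requires a family version of the spray construction with the submersivity of $t\mapsto \Pcal(x_s^t,y_s^t)$ holding uniformly in $s$, and a partition-of-unity patching of the sprays over $\S^k$ compatible with the composition/dominating-spray structure — essentially the machinery of \cite[\S2--3]{ForstnericLarusson2016}. The infinite-topological-type case adds the extra step of an exhaustion $M=\bigcup_j M_j$ by finite-type Runge domains and a diagonal/limiting argument to pass from homotopy equivalences on each $M_j$ to a weak homotopy equivalence on $M$; since $\Iscr(M,\C^{2n})$ and $\Iscr_*(M,\C^{2n})$ are inverse limits of the corresponding spaces for the $M_j$ along (weak) homotopy equivalences, $\phi$ is a weak homotopy equivalence, and when $M$ itself has finite topological type the stronger statement follows because these mapping spaces are then absolute neighborhood retracts, so a weak homotopy equivalence between them is a genuine homotopy equivalence. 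Finally, Corollary \ref{cor:whe} is immediate: composing $\phi$ with the maps $\pi,\varphi,\iota,\tau$ in \eqref{eq:fivemaps}, each already known to be a (weak) homotopy equivalence, yields a (weak) homotopy equivalence $\Lscr(M,\C^{2n+1})\to\Cscr(M,\S^{4n-1})$, and likewise for every subcomposition.
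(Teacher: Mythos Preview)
Your final paragraph is exactly right: the corollary is immediate from Theorem~\ref{th:immersions} by composing (weak) homotopy equivalences, and that is precisely how the paper states it. The substance of your proposal is therefore an outline of Theorem~\ref{th:immersions}, and here there is a genuine gap.

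Your mechanism --- build a period-dominating spray around each $(x_s,y_s)$, splice over the parameter sphere, then ``apply the parametric Oka-type argument to move $\Pcal\circ g$ continuously to $0$'' --- does not work as stated. A period-dominating spray makes $t\mapsto\Pcal(x^t,y^t)$ submersive at $t=0$, so by the implicit function theorem you can hit period vectors \emph{near} $\Pcal(x,y)$; it gives no way to reach $0$ when the initial periods are large, and there is no reason for $\Pcal\colon\Iscr(M,\C^{2n})\to\C^\ell$ to be a Serre fibration. The paper never attempts a global period-kill. Instead (Theorem~\ref{th:parametric}) it runs a Morse-theoretic exhaustion $D_1\subset D_2\subset\cdots$ of $M$, starts from a domain on which the family is already exact, and at each stage \emph{extends} exact immersions from $U_j\supset\bar D_j$ to $U_{j+1}\supset\bar D_{j+1}$ with $\Cscr^1$-approximation on $\bar D_j$; the sprays are used only to absorb the small period errors introduced by Mergelyan approximation and by the Oka principle for maps into $\C^{2n}_*$. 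The ingredient you do not mention, and which is the paper's main new technical input, is the \emph{critical} step, where a new homology class is born as an arc $E$ attached to $\bar D_j$: here one must first adjust the family of immersions along $E$ so that the integral of $x\,dy$ over the new cycle vanishes (at $t=1$, and relatively over $Q$), and this is done by a parametric one-dimensional convex integration lemma (Lemma~\ref{lem:CI}). That step is not subsumed by the dominating-spray formalism you invoke.

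Your inverse-limit sentence for the infinite-type case is also unsafe as written: an inverse limit of weak homotopy equivalences need not be one. The paper avoids this by producing the deformation on $M$ directly as a $\Cscr^1$-convergent limit, with quantitative control at each inductive stage. Your ANR remark for the finite-type upgrade, on the other hand, matches the paper's Section~\ref{sec:strong}.
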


The first part of Theorem \ref{th:immersions} follows immediately from Theorem \ref{th:parametric}, which establishes the 
parametric Oka principle with approximation for the inclusion \eqref{eq:incl}. The same proof
gives the parametric Oka principle with approximation 
for holomorphic Legendrian immersions;  see Remark \ref{rem:php-whe}.  
The basic case of the latter result is
\cite[Theorem 1.1]{AlarconForstnericLopez2016Legendrian}. The parametric
case considered here is more demanding, but unavoidable when analysing
the homotopy type of these mapping spaces. The second part of Theorem \ref{th:immersions} is proved in Sec.\ \ref{sec:strong}. Our proofs bring together tools from 
complex analysis and geometry, convex integration theory, and the theory of absolute neighborhood retracts.  

The examples in \cite{Forstneric2016hyp}
show that Theorem \ref{th:immersions} and Corollary \ref{cor:whe} have no analogue for more general 
holomorphic contact structures on Euclidean spaces; see Remark \ref{rem:hyperbolic}.
In those examples, the contact structure is Kobayashi hyperbolic, and hence it does not admit
any nonconstant Legendrian maps from $\C$ or $\C_*$.

It was shown in \cite{AlarconForstnericLopez2016Legendrian} that the space $\Lscr(M,\C^{2n+1})$ 
is very big from the analytic viewpoint. In particular, 
every holomorphic Legendrian map $K\to \C^{2n+1}$ from a (neighborhood of) a 
compact $\Oscr(M)$-convex subset  $K\subset M$ can be approximated on $K$
by proper holomorphic Legendrian embeddings of $M$ into $\C^{2n+1}$.  
Furthermore, every bordered Riemann surface carries a
{\em complete} proper holomorphic Legendrian embedding into the ball of $\C^{3}$,
and a complete bounded holomorphic Legendrian embedding in $\C^3$ 
such that the image surface is bounded by Jordan curves. 
(An immersion $F\colon M\to\R^{n}$ is said to be complete if the pull-back of the Euclidean metric 
on $\R^n$ by $F$ is a complete metric on $M$.)
Analogous results for holomorphic immersions $M\to\C^n$ $(n\ge 2)$,
null holomorphic curves in $\C^n$ $(n\ge 3)$, and conformal minimal immersions in
$\R^n$ ($n\ge 3$) were proved in \cite{AlarconDrinovecForstnericLopez2015MC,AlarconDrinovecForstnericLopez2015PLMS}. 

On a compact bordered Riemann surface $M$, we  
define for every integer $r\geq 1$ the corresponding mapping spaces
$\Lscr^r(M,\C^{2n+1})$ and $\Iscr^r_*(M,\C^{2n}) \subset \Iscr^r(M,\C^{2n})$ by considering 
maps of class $\Cscr^r(M)$ that are holomorphic in the interior $\mathring M=M\setminus bM$;
see Subsec.\ \ref{subs:Legendrian}.
These spaces are complex Banach manifolds (see Theorem \ref{th:Banach}), and hence absolute neighborhood retracts,
and the corresponding maps in the sequence \eqref{eq:fivemaps} are homotopy equivalences (see Remark \ref{rem:php-whe} and Sec.\ \ref{sec:strong}).

We will now explicitly describe the homotopy type of $\Lscr(M,\C^{2n+1})$ and determine its homotopy groups in terms of the homotopy groups of the sphere $\S^{4n-1}$.

A connected open Riemann surface $M$ is homotopy equivalent to a bouquet of circles $\bigvee_{i=1}^\ell \S^1$, where 
$\ell \in\{0,1,\ldots,\infty\}$ is the rank of the free abelian group $H_1(M;\Z)=\Z^\ell$.  For $\ell=0$, we take the bouquet to 
be a point.  The surface $M$ has finite topological type if and only if $\ell$ is finite; then $M$ is biholomorphic to the 
complement of a finite set of points and closed disks in a compact Riemann surface 
(see Stout \cite{Stout1965TAMS}). 

The bouquet $\bigvee_{i=1}^\ell \S^1$ embeds in $M$ as a deformation retract of $M$.  
Hence we have a homotopy equivalence
\[ \Cscr(M,\S^{4n-1}) \to \Cscr(\bigvee_{i=1}^\ell \S^1,\S^{4n-1}). \]
For a space $Y$, let us denote the space $\Cscr(\bigvee_{i=1}^\ell \S^1, Y)$ by $\Lcal_\ell Y$.  
Then $\Lcal_1 Y$ is the free loop space $\Lcal Y$ of $Y$.  It is well known that if we choose a base point $s\in\S^1$, 
then the evaluation map $\Lcal Y\to Y$, $\gamma\mapsto\gamma(s)$, is a fibration whose fibre is the loop space 
$\Omega Y$ of $Y$ \cite[Theorem 10]{Strom1968}.  More generally, taking $s$ to be the common point of the circles 
in the bouquet $\bigvee_{i=1}^\ell \S^1$, $\ell\geq 1$, the evaluation map $\Lcal_\ell Y \to Y$ is a fibration whose fibre 
is $(\Omega Y)^\ell$.

Corollary \ref{cor:whe} now implies the first part of the following result.

%
%
\begin{corollary}\label{cor:loopspace}
Let $M$ be a connected open Riemann surface with $H_1(M;\Z)=\Z^\ell$, $\ell \in\{0,1,\ldots,\infty\}$.  For each $n\geq 1$, 
the spaces $\Lscr(M,\C^{2n+1})$ and $\Lcal_\ell\S^{4n-1}$ are weakly homotopy equivalent.  If $M$ has finite topological 
type, then they are homotopy equivalent.

It follows that $\Lscr(M,\C^{2n+1})$ is path connected and simply connected, and for each $k\geq 2$,
\[  \pi_k(\Lscr(M,\C^{2n+1})) = \pi_k(\S^{4n-1}) \times \pi_{k+1}(\S^{4n-1})^\ell. \]
In particular, $\Lscr(M,\C^{2n+1})$ is $(4n-3)$-connected.
\end{corollary}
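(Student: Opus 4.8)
The first assertion is a formality given Corollary \ref{cor:whe}: that result provides a weak homotopy equivalence $\Lscr(M,\C^{2n+1})\to\Cscr(M,\S^{4n-1})$, which is a genuine homotopy equivalence when $M$ has finite topological type, and the deformation retraction of $M$ onto the embedded bouquet $\bigvee_{i=1}^\ell\S^1$ induces a homotopy equivalence $\Cscr(M,\S^{4n-1})\to\Lcal_\ell\S^{4n-1}$; composing the two gives the claim. Since a weak homotopy equivalence induces isomorphisms on all homotopy groups, and in particular preserves path-connectedness, simple connectedness and $k$-connectedness, the remaining assertions will follow once we compute $\pi_*(\Lcal_\ell\S^{4n-1})$. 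For $\ell=0$ this space is just $\S^{4n-1}$, which is $(4n-2)$-connected and whose homotopy groups match the stated formula trivially, so from now on assume $\ell\geq 1$.

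The plan is to run the long exact sequence of homotopy groups of the evaluation fibration $(\Omega\S^{4n-1})^\ell\to\Lcal_\ell\S^{4n-1}\to\S^{4n-1}$ recalled before the corollary, using the crucial fact that it splits: the map sending a point $y\in\S^{4n-1}$ to the constant map at $y$ is a continuous section of the evaluation. Hence $\pi_k(\Lcal_\ell\S^{4n-1})\to\pi_k(\S^{4n-1})$ is split surjective for every $k$, every connecting homomorphism vanishes, and the long exact sequence decomposes into split short exact sequences
\[
	0\longrightarrow \pi_k\bigl((\Omega\S^{4n-1})^\ell\bigr)\longrightarrow \pi_k(\Lcal_\ell\S^{4n-1})\longrightarrow \pi_k(\S^{4n-1})\longrightarrow 0 .
\]
Now $\pi_k$ commutes with (countable) products and $\pi_k(\Omega Y)=\pi_{k+1}(Y)$, so the left-hand term is $\pi_{k+1}(\S^{4n-1})^\ell$; for $k\geq 2$ all the groups are abelian and the sequence splits as a direct product, giving $\pi_k(\Lscr(M,\C^{2n+1}))\cong\pi_k(\S^{4n-1})\times\pi_{k+1}(\S^{4n-1})^\ell$.

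It remains to read off the low-degree and connectivity statements from the $(4n-2)$-connectedness of $\S^{4n-1}$. Since $\pi_1(\S^{4n-1})=0$, the fibre $(\Omega\S^{4n-1})^\ell$ is path connected and has $\pi_1=\pi_2(\S^{4n-1})^\ell=0$; the exact sequences $\pi_1((\Omega\S^{4n-1})^\ell)\to\pi_1(\Lcal_\ell\S^{4n-1})\to\pi_1(\S^{4n-1})$ and $\pi_1(\S^{4n-1})\to\pi_0((\Omega\S^{4n-1})^\ell)\to\pi_0(\Lcal_\ell\S^{4n-1})\to\pi_0(\S^{4n-1})$ then force $\Lcal_\ell\S^{4n-1}$ to be path connected and simply connected. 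For $2\leq k\leq 4n-3$ we have $k<4n-1$ and $k+1\leq 4n-2<4n-1$, so $\pi_k(\S^{4n-1})=\pi_{k+1}(\S^{4n-1})=0$ and hence $\pi_k(\Lcal_\ell\S^{4n-1})=0$; therefore $\Lcal_\ell\S^{4n-1}$, and with it $\Lscr(M,\C^{2n+1})$, is $(4n-3)$-connected. Pulling all of this back along the weak homotopy equivalence of the first paragraph completes the argument.

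The only point requiring care is the case $\ell=\infty$. There one must verify that $\Cscr(\bigvee_{i=1}^\infty\S^1,Y)$ with the compact-open topology is homeomorphic to the subspace of the countable product $\prod_{i=1}^\infty\Lcal Y$ on which all the basepoint values agree --- which follows because every compact subset of the infinite bouquet lies in a finite subbouquet --- and that in this description the evaluation at the common basepoint is still a fibration (the basepoint inclusion being a cofibration) with fibre the countable product $\prod_{i=1}^\infty\Omega Y$. Given this, the identity $\pi_k(\prod_{i=1}^\infty\Omega Y)=\pi_{k+1}(Y)^\infty$ and the section-splitting argument above go through unchanged, so the finite and infinite cases are handled uniformly. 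Everything else is a routine unwinding of the fibration sequence.
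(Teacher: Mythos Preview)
Your argument is correct and follows essentially the same route as the paper: both use the evaluation fibration $(\Omega\S^{4n-1})^\ell\to\Lcal_\ell\S^{4n-1}\to\S^{4n-1}$, observe that the constant-map section splits the long exact sequence into short exact sequences, and read off the homotopy groups from $\pi_k(\Omega Y)=\pi_{k+1}(Y)$. Your write-up is in fact more careful than the paper's in spelling out the low-degree cases and the $\ell=\infty$ case (the paper treats the latter implicitly), but the underlying strategy is identical.
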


\begin{proof}
Recall that $\pi_i(\S^m)=0$ for all $i<m$, and $\pi_m(\S^m)=\Z$.  We must prove the second part of the corollary.  
It is clear for $\ell=0$, so let us assume that $\ell\geq 1$.  Since $Y=\S^{4n-1}$ is simply connected, $\Lcal_\ell Y$ is 
path connected.  Consider the long exact sequence of homotopy groups associated to the fibration $\Lcal_\ell Y\to Y$ 
with fibre $(\Omega Y)^\ell$,
\[ \cdots \to \pi_{k+1}(Y) \to \pi_k((\Omega Y)^\ell) \to \pi_k(\Lcal_\ell Y) \to \pi_k(Y) \to \cdots, \quad k\geq 1, \]
and recall that $\pi_i(\Omega Y)=\pi_{i+1}(Y)$ for all $i\geq 0$.  We see that $\pi_1(\Lcal_\ell Y)=0$.  
The fibration $\Lcal_\ell Y\to Y$ has a section defined by taking a point in $Y$ to the map that takes the whole wedge of 
circles to that point.  Let $k\geq 2$.  The induced sections of the morphisms $\pi_j(\Lcal_\ell Y) \to \pi_j(Y)$ for $j=k$ 
and $j=k+1$ yield a split short exact sequence of abelian groups
\[ 
	0 \to \pi_k((\Omega Y)^\ell) \to \pi_k(\Lcal_\ell Y) \to \pi_k(Y) \to 0, 
\]
demonstrating that $\pi_k(\Lcal_\ell Y) = \pi_k(Y) \times \pi_{k+1}(Y)^\ell$.
\end{proof}

Corollary \ref{cor:loopspace} shows that holomorphic Legendrian 
immersions of an open Riemann surface $M$ into $\C^{2n+1}$ have no homotopy invariants.
Any two such immersions are homotopic through holomorphic Legendrian immersions, and every loop
of Legendrian immersions in $\Lscr(M,\C^3)$ is contractible. The first nontrivial invariant of the space
$\Lscr(M,\C^3)$ is its second homotopy group; see Remark \ref{rem:hyperbolic}. This is very different
from the case of smooth Legendrian knots in a contact $3$-manifold, 
where the basic topological invariants are the rotation number and the 
Thurston-Bennequin number; see e.g.\ \cite{Bennequin1983AST,Eliashberg1993IMRN,FuchsTabachnikov1997T}.

%
%
\begin{remark}\label{rem:hyperbolic}
(a) Theorem \ref{th:immersions} and Corollary \ref{cor:whe} fail for certain other complex contact structures on $\C^{2n+1}$.
Indeed, for any $n\geq 1$, the first author has constructed a Kobayashi hyperbolic complex contact form $\eta$ on $\C^{2n+1}$ \cite{Forstneric2016hyp}. In particular, every 
holomorphic $\eta$-Legendrian map $M\to \C^{2n+1}$ from $M=\C$ or $M=\C_*$ is constant.
Thus, the space $\Lscr_\eta(\C_*,\C^3)=\C^3$ is contractible.
On the other hand, for the $\alpha$-Legendrian maps 
(where $\alpha=dz+xdy$),
\[
	\pi_2(\Lscr_\alpha (\C_*,\C^3)) = \pi_2(\Lcal\, \S^3) = \pi_3(\S^3)=\Z	
\]
by Corollary \ref{cor:loopspace}.  As observed in \cite{Forstneric2016hyp}, the hyperbolic contact forms
$\eta$ constructed there are isotopic to $\alpha$ through a $1$-parameter
family of holomorphic contact forms on $\C^{2n+1}$.

(b) It is easily seen that Corollary \ref{cor:whe} fails if we include
ramified Legendrian maps in the statement. On the other hand, 
it was shown in \cite[Lemma 4.4 and Theorem 5.1]{AlarconForstnericLopez2016Legendrian} 
that every holomorphic Legendrian map of an open Riemann surface to $\C^{2n+1}$ can be approximated 
uniformly on compacts by holomorphic Legendrian embeddings. 
\end{remark}

In conclusion, we observe that holomorphic Legendrian curves in a $3$-dimensional
complex contact manifold are the only smoothly immersed Legendrian surfaces.
Simple examples show that this fails in complex contact manifolds 
of dimension at least $5$. 

%
%
\begin{proposition}\label{prop:complex}
Let $(X,\xi)$ be  a $3$-dimensional complex contact manifold.
If $M$ is a smooth real surface and $F\colon M\to X$ is a smooth
Legendrian immersion, then $F(M)$ is an immersed complex curve in $X$.
Furthermore, $M$ admits the structure of a Riemann surface such that
$F\colon M\to X$ is holomorphic.
\end{proposition}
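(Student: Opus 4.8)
The plan is to work locally and then observe that the Riemann surface structure is canonical, so the local pieces glue. Fix a point $p\in M$ and work in a Darboux chart around $F(p)$, so that $X$ is locally $\C^3$ with coordinates $(x,y,z)$ and $\xi=\ker\alpha$, where $\alpha=dz+x\,dy$. Write $F=(a,b,c)\colon U\to\C^3$ on a coordinate neighborhood $U\subseteq M$ of $p$, where $a,b,c$ are smooth complex-valued functions of a real parameter. The Legendrian condition $F^*\alpha=0$ reads $dc+a\,db=0$ on $U$. The contact plane $\xi_{F(u)}=\ker\alpha_{F(u)}\subset\C^3$ is a complex $2$-plane, and $dF_u$ maps $T_uM$ (a real $2$-plane) into it. The key point is that $dF_u$ is injective (since $F$ is an immersion) and its image is a real $2$-plane inside a \emph{complex} line's worth of freedom once one coordinate is eliminated: indeed, on $\xi_{F(u)}$ the relation $dz=-x\,dy$ holds, so $\xi_{F(u)}$ is parametrized complex-linearly by $(dx,dy)\in\C^2$. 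Thus, modulo $\alpha$, the map $dF_u$ is determined by $u\mapsto (da_u, db_u)\in\C^2$, a complex-linear-valued $1$-form, and $dF_u$ is injective iff $(da_u,db_u)\neq 0$.

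The heart of the argument is to show that the pair $(da,db)$ cannot vanish in only one real direction: I will argue that the real $2$-plane $dF_u(T_uM)\subset\xi_{F(u)}\cong\C^2$ is a complex line. Suppose not; then there is a nonzero real tangent vector $v\in T_uM$ with $(da_u(v),db_u(v))$ and $(da_u(iv\text{-image}),\ldots)$ not complex-proportional — but $T_uM$ has no complex structure a priori, so more carefully: pick a real basis $v_1,v_2$ of $T_uM$ and set $w_k=(da_u(v_k),db_u(v_k))\in\C^2$. The image plane is $\R w_1+\R w_2$. If this is not a complex line, then $w_1,w_2$ are $\R$-linearly independent and $w_2\notin \C w_1$, i.e.\ $\{w_1,w_2\}$ spans a real $2$-plane transverse to all complex lines except possibly... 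Here is where I expect the real obstacle: I must use that $\xi$ genuinely carries the structure coming from $d\alpha$, not merely that it is a complex plane. On $\xi$, $d\alpha=dx\wedge dy$ restricts to a nondegenerate \emph{complex} $2$-form (a complex symplectic / conformal area form), and $F^*d\alpha=d(F^*\alpha)=0$, so $da\wedge db=0$ as a complex-valued $2$-form on the real surface $U$. This is the crucial identity: $da\wedge db\equiv 0$ means $da$ and $db$ are pointwise complex-proportional as complex-valued $1$-forms, i.e.\ there is (locally, where one of them is nonzero) a complex-valued function $\lambda$ with $db=\lambda\,da$ (or $da=\mu\,db$). Combined with $F$ being an immersion and $dc=-a\,db$, this forces $da$ (say) to be a nowhere-vanishing complex-valued $1$-form on $U$, and then $w_2=\lambda(u)w_1$, so the image plane $\R w_1+\R w_2=\C w_1$ is a complex line. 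Hence $dF_u(T_uM)=\xi$-line is complex, which is exactly the statement that $F(M)$ is an immersed complex curve.

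With this in hand, the Riemann surface structure on $M$ is produced as follows: the nowhere-vanishing complex-valued $1$-form $da$ (on this chart) determines an almost complex structure $J_p$ on $T_uM$ by declaring $da_u$ to be $\C$-linear, i.e.\ $J_u$ is the unique endomorphism with $da_u\circ J_u=i\,da_u$; equivalently $J_u$ is pulled back from the standard complex structure on the complex line $dF_u(T_uM)\subset\C^3$ via the isomorphism $dF_u$. Since $db=\lambda\,da$ and $dc=-a\,db=-a\lambda\,da$, all three component differentials are $\C$-linear for this same $J$, so $J$ is independent of the chart and of which coordinate was eliminated; on overlaps the transition maps of $X$ are holomorphic and preserve $\xi$ and its complex structure, so the locally defined $J$'s agree. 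Thus $J$ is a global almost complex structure on the real surface $M$; in real dimension $2$ every almost complex structure is integrable (the Newlander–Nirenberg condition is vacuous), so $(M,J)$ is a Riemann surface, and by construction $dF_u$ is complex-linear at every $u$, i.e.\ $F\colon(M,J)\to X$ is holomorphic. The main obstacle, as indicated, is the step extracting $da\wedge db=0$ and correctly deducing pointwise $\C$-proportionality of $da,db$ together with non-vanishing of one of them from the immersion hypothesis — one must check that $da$ and $db$ cannot vanish simultaneously at a point (else $dc=-a\,db=0$ there too, contradicting immersivity) and handle the (closed, possibly nonempty) zero sets of each individually by swapping the roles of $a$ and $b$.
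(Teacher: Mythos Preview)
Your proof is correct and follows essentially the same route as the paper: the key identity $da\wedge db=0$ (written there as $x_u y_v - x_v y_u = 0$ after choosing real coordinates $(u,v)$ on $M$) is obtained by differentiating the Legendrian condition, whence $dF_p(T_pM)$ is a complex line, and the almost complex structure on $M$ is then pulled back from $X$ via $dF$ exactly as you do. One small slip: with $db=\lambda\,da$ you have $w_k=da(v_k)\,(1,\lambda)$, so $w_2$ is a complex multiple of $w_1$ but the scalar is $da(v_2)/da(v_1)$ rather than $\lambda$; the conclusion that $\R w_1+\R w_2$ equals the complex line $\C(1,\lambda)$ (by the real dimension count coming from immersivity) is unaffected.
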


\begin{proof}
Fix a point $p_0\in M$. By Darboux's theorem, there exist
local holomorphic coordinates $(x,y,z)$ on a neighborhood of the point $F(p_0)\in X$ 
in which the contact structure $\xi$ is given by $\alpha=dz+xdy$. 
Choose smooth local coordinates $(u,v)$ on a neighborhood of $p_0$ in 
$M$ and write $F(u,v)=(x(u,v), y(u,v),z(u,v))$. Then the map $\sigma(u,v)=(x(u,v), y(u,v))$
is an immersion. Differentiation of the equation $dz+xdy=0$ 
gives $dx(u,v)\wedge dy(u,v)=0$ which is equivalent to $x_u y_v-x_v y_u=0$. 
This means that the vectors $\sigma_u = (x_u,y_u)$ and $\sigma_v=(x_v,y_v)$ in $\C^2$
are $\C$-linearly dependent, and hence they span a complex line. Clearly, this line is 
the image of the tangent space $T_{(u,v)} M$ by the differential of $\sigma$ at 
the point $(u,v)$. Finally, since the equation $dz=-xdy$ is $\C$-linear, it follows that 
$dF_{p}(T_{p} M)$ is a complex line in $T_{F(p)} X$ for every point $p\in M$. 

Let $J\colon TX\to TX$ denote the almost complex structure operator induced by the given 
complex structure on $X$. Since $dF_{p}(T_{p} M)$ is a $J$-complex line in $T_{F(p)} X$ 
for every $p\in M$, there exists a unique almost complex structure $J_0\colon TM\to TM$ 
such that $dF_p(J_0 \eta)=J dF_p(\eta)$ for every $p\in M$ and $\eta \in T_p M$. 
The surface $(M,J_0)$ is then a Riemann surface and $F\colon M\to X$ is a 
holomorphic Legendrian immersion.
\end{proof}

\section{Preliminaries}\label{sec:prelim}

%
%
\subsection{Riemann surfaces and mapping spaces}\label{ssec:RS}
For $n\geq 1$, we denote by $|\cdot|$ the Euclidean norm on $\C^n$. 
Given a topological space $K$ and a map $f\colon K\to\C^n$, we define 
\[
	\|f\|_{0,K}:=\sup\{|f(u)|\colon u\in K\}.
\]

Let $M$ be an open Riemann surface. We denote by $\Oscr(M)$ the algebra of all holomorphic functions on $M$. 
If $K$ is a compact subset of $M$, then $\Oscr(K)$ is algebra of all holomorphic functions on 
open neighborhoods of $K$ in $M$, where we identify any pair of functions that agree on some neighborhood of $K$. 
If $K$ is a smoothly bounded compact domain in $M$, then for any integer $r\geq 0$,
we denote by $\Cscr^r(K)$ the algebra of all $r$ times continuously differentiable 
complex valued functions on $K$, and by $\Ascr^r(K)$ the subalgebra of $\Cscr^r(M)$
consisting of all functions that are holomorphic in the interior $\mathring K=K\setminus bK$ of $K$.
We denote by $\|f\|_{r,K}$ the $\Cscr^r$ norm of a function $f\in\Cscr^r(K)$, where the derivatives
are measured with respect to a Riemannian metric on $M$; the choice of the metric
will not be important. The corresponding notation $\Oscr(M)^n$ and $\Ascr^r(K)^n$ and norms 
$\|\cdot\|_{r,K}$ are used for maps $f=(f_1,\ldots,f_n)$ with values in $\C^n$, whose component functions 
$f_j$ belong to the respective function spaces.

A {\em compact bordered Riemann surface} is a compact Riemann surface $M$ whose 
nonempty boundary $bM$ consists of finitely many smooth Jordan curves. 
The interior $\mathring M=M\setminus bM$ of a compact Riemann surface is a {\em bordered Riemann surface}. 
It is classical \cite{Stout1965TAMS} 
that every compact bordered Riemann surface $M$ is conformally equivalent to a smoothly bounded 
compact domain in an open Riemann surface $\wt M$, so the function spaces 
$\Ascr^r(M)$ are defined as above. Note that $\Ascr^r(M)$ is a complex Banach 
algebra for every $r\geq 0$. 

Every bordered Riemann surface $M$ admits smooth closed curves $C_1,\ldots,C_\ell\subset \mathring M$ 
forming a basis of the homology group $H_1(M;\Z)=\Z^\ell$ such that the union $C= \bigcup_{j=1}^\ell C_j$ is 
{\em Runge} in $M$, meaning that the Mergelyan approximation theorem \cite{Mergelyan1951DAN}
holds: every continuous function on $C$ can be uniformly approximated by functions that are holomorphic
on $M$. When $M$ is connected, this holds if and only if $M\setminus C$ has no relatively compact 
connected components.

\subsection{Spaces of Legendrian immersions}\label{subs:Legendrian}
Let $n\in\N=\{1,2,3,\ldots\}$. On the space $\C^{2n+1}$ we use the coordinates $(x,y,z)$ introduced by \eqref{eq:coord}.
To simplify the notation, we often write the standard contact form \eqref{eq:alpha} on $\C^{2n+1}$ in the form
\[
	\alpha = dz+xdy, \quad \text{where}\ \ xdy=  \sum_{j=1}^n x_j \, dy_j.
\]
We identify $\C^{2n}_{(x,y)}$ with the subspace $\{z=0\}\subset \C^{2n+1}$.
Recall (see \eqref{eq:incl})
that $\Iscr(M,\C^{n})$ denotes the space of holomorphic immersions $M\to\C^{n}$,
and $\Iscr_*(M,\C^{2n})$ is the closed subspace of $\Iscr(M,\C^{2n})$ 
consisting of holomorphic immersions $(x,y)\colon M\to\C^{2n}$
for which the holomorphic $1$-form $xdy$ is exact on $M$: the {\em exact holomorphic immersions}. 
The space $\Lscr(M,\C^{2n+1})$ of holomorphic Legendrian immersions
$M\to\C^{2n+1}$ is homeomorphic to $\Iscr_*(M,\C^{2n}) \times\C$ provided $M$ is connected; 
see \eqref{eq:homeo}.

On a compact  bordered Riemann surface $M$ with smooth boundary we introduce the
analogous mapping spaces for any integer $r\geq 1$:
\begin{itemize}
\item $\Iscr^r(M,\C^{n})$ is the space of holomorphic immersions $M\to\C^{n}$ of class $\Ascr^r(M)$;
\vspace{1mm}
\item $\Iscr^r_*(M,\C^{2n})$ is the space of holomorphic immersions $(x,y)\colon M\to\C^{2n}$
of class $\Ascr^r(M)$ for which the holomorphic $1$-form $xdy=  \sum_{j=1}^n x_j \, dy_j$ is exact;
\vspace{1mm}
\item $\Lscr^r(M,\C^{2n+1})$ is the space of immersions $F\colon M\to \C^{2n+1}$ 
of class $\Ascr^r(M)$ such that $F^*\alpha=0$, that is, $F$ is Legendrian with respect to the 
contact form \eqref{eq:alpha}.
\end{itemize}
As in Sec.\ \ref{sec:intro}, when $M$ is connected, the map \eqref{eq:homeo} induces a homeomorphism 
\[
	\Lscr^r(M,\C^{2n+1}) \to \Iscr^r_*(M,\C^{2n}) \times \C.
\]

\subsection{The period map, dominating sprays, and a local structure theorem}\label{ssec:period}

Let $M$ be an open Riemann surface of finite topological type. Let $H_1(M;\Z)=\Z^\ell$ with $\ell\geq 0$.
Pick closed curves $C_1,\ldots,C_\ell\subset M$ forming a Runge homology basis
(see Subsec.\ \ref{ssec:RS}). Let 
\[
	\Pcal=(\Pcal_1,\ldots,\Pcal_\ell) : \Oscr(M)^{2n} \to\C^\ell
\]
be the {\em period map} whose $j$-th component is given by
\begin{equation}\label{eq:periodmap}
	\Pcal_j(x,y)=\int_{C_j} x\, dy,\qquad x,y \in \Oscr(M)^n. 
\end{equation}
Note that $\Pcal(x,y)=0$ if and only if the $1$-form $xdy =  \sum_{i=1}^n x_i \, dy_i$ is exact, and hence
\[
	\Iscr_*(M,\C^{2n}) =\{(x,y)\in \Iscr(M,\C^{2n}) : \Pcal(x,y)=0\}.
\]
If $M$ is a compact smoothly bordered Riemann surface, then \eqref{eq:periodmap} defines a period map 
\begin{equation}\label{eq:P}
	\Pcal : \Ascr^r(M)^{2n} \to\C^\ell,\quad r\in \N,
\end{equation}
and
\[ 
	\Iscr^r_*(M,\C^{2n}) =\{(x,y)\in \Iscr^r(M,\C^{2n}) : \Pcal(x,y)=0\}.
\]

The following lemma provides an important tool used in the proof of Theorem \ref{th:parametric}.
Clearly, the lemma is vacuous if (and only if) $\ell=0$, that is, $M$ is the closed disk $\overline\D$.

%
%
\begin{lemma} 
\label{lem:perioddominatingsprays}
Let $M$ be a compact bordered Riemann surface, and let
$\Pcal$ be the period map \eqref{eq:P} associated to a Runge homology basis of $M$. 
Assume that $P$ is a compact Hausdorff space (a parameter space) and $r\in \N$. 
Given a continuous map $(x,y)\colon P\times M  \to\C^{2n}$ such that 
for every $p\in P$, the map $(x(p,\cdotp),y(p,\cdotp))\colon M\to\C^{2n}$ is 
nonconstant, of class $\Ascr^r(M)$, and its differential is continuous as a function of $(p,u)\in P\times M$,
there exist an integer $N\in\N$ and a continuous map 
$(\tilde x,\tilde y) \colon P\times M \times \C^N \to \C^{2n}$ such that the map
$(\tilde x(p,\cdotp,\cdotp),\tilde y(p,\cdotp,\cdotp)) \colon M\times \C^N \to\C^{2n}$ is of class $\Ascr^r(M\times \C^N)$ for every $p\in P$, its differential is continuous on $P\times M \times \C^N$, and the partial differential
\begin{equation}\label{eq:derivative-period}
	\frac{\di}{\di\zeta}\bigg|_{\zeta=0} \Pcal(\tilde x(p,\cdotp,\zeta),\tilde y(p,\cdotp,\zeta)) \colon \C^N \lra \C^\ell
\end{equation}
is surjective for every $p\in P$. (Here, $\zeta=(\zeta_1,\ldots,\zeta_N)$ are coordinates on $\C^N$.)
\end{lemma}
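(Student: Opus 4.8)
The plan is to construct the spray $(\tilde x, \tilde y)$ explicitly by perturbing $(x,y)$ with a finite-dimensional family of compactly supported modifications, chosen so that the derivative of the period map in the spray directions hits all of $\C^\ell$. I will work with the linearization of $\Pcal$ first. For a fixed $p\in P$ and a tangent vector $(\dot x, \dot y)$ to the space of $\Ascr^r$ maps at $(x(p,\cdot),y(p,\cdot))$, the directional derivative of $\Pcal_j$ is
\[
	D\Pcal_j(x,y)[\dot x,\dot y] = \int_{C_j}\bigl(\dot x\, dy + x\, d\dot y\bigr)
	= \int_{C_j}\bigl(\dot x\, dy - \dot y\, dx\bigr),
\]
the last equality by integration by parts around the closed curve $C_j$. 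So the task reduces to finding, for each $p$, finitely many ``test $1$-forms'' of the shape $\dot x\, dy - \dot y\, dx$ whose period vectors over $(C_1,\dots,C_\ell)$ span $\C^\ell$, and then doing this continuously and uniformly in $p$.

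First I would fix a single $p_0\in P$ and produce the spray locally. Since $(x(p_0,\cdot), y(p_0,\cdot))$ is a nonconstant holomorphic map, its differential $d(x,y)$ is not identically zero; pick a point $u_0\in \mathring M$ (away from the curves $C_j$, which we may arrange since $M\setminus C$ has no relatively compact components and $C$ can be pushed off any chosen point) where, say, $dx_k\neq 0$ or $dy_k\neq 0$ for some $k$. Near such a point one can choose compactly supported smooth functions $h$ and set $\dot x_k = h$, $\dot y_k$ chosen so that $\dot x_k\, dy_k - \dot y_k\, dx_k$ is a prescribed compactly supported $1$-form bump; after applying Mergelyan/Runge approximation relative to the curves $C_j$ (using that $C$ is Runge) we may take the modifications to be holomorphic on $M$ and to be spread around loops representing a basis of $H_1$. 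Concretely, for each $j=1,\dots,\ell$ I would arrange a holomorphic variation supported in a neighborhood of a point of $C_j$ producing a nonzero $j$-th period and negligible (or controllable) periods over the other $C_i$; a small triangular/invertibility argument then shows the resulting $\ell$ variations give an invertible $\ell\times\ell$ period matrix. Exponentiating these linear variations — i.e. defining $(\tilde x(p_0,u,\zeta), \tilde y(p_0,u,\zeta))$ as $(x(p_0,u),y(p_0,u))$ plus $\sum_{m}\zeta_m (\dot x^{(m)}, \dot y^{(m)})(u)$, or a nonlinear version if one prefers to keep the immersion property, with $N=\ell$ — yields \eqref{eq:derivative-period} surjective at $p_0$.

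Next comes the passage to the compact parameter space $P$. Surjectivity of \eqref{eq:derivative-period} is an open condition: the matrix of partials is continuous in $p$ by the hypothesis that the differential of $(x,y)$ is continuous on $P\times M$, so the spray built at $p_0$ keeps \eqref{eq:derivative-period} surjective for all $p$ in a neighborhood $U_{p_0}\subset P$. Cover the compact space $P$ by finitely many such neighborhoods $U_{p_1},\dots,U_{p_s}$ with associated sprays of fibre dimensions $N_1,\dots,N_s$, take a partition of unity $\{\chi_i\}$ subordinate to the cover, and assemble a single spray of fibre dimension $N=N_1+\dots+N_s$ by letting the $i$-th block of spray parameters act through $\chi_i(p)$ times the $i$-th variation. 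For any $p\in P$, at least one $\chi_i(p)>0$, and that block alone makes the partial differential surjective; the regularity claims ($\Ascr^r$ in $M$, continuity of the differential on $P\times M\times\C^N$, holomorphic dependence on $\zeta$) are inherited from the building blocks since everything is a finite sum of products of continuous functions of $p$ with $\Ascr^r$ maps of $u$ depending holomorphically (in fact polynomially) on $\zeta$.

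The main obstacle is the first step: constructing, in a way that is genuinely controlled, the local holomorphic variations with a prescribed invertible period matrix — one must be careful that the supports of the bumps can be localized near the loops $C_j$ while keeping the perturbations holomorphic on all of $M$ (this is where the Runge property of $C=\bigcup C_j$ and Mergelyan approximation enter, together with the freedom to slide the point $u_0$ off $C$ using that $M$ has no relatively compact components of $M\setminus C$), and that the approximation errors in the off-diagonal periods stay small enough to preserve invertibility. The nonconstancy hypothesis is exactly what guarantees $d(x(p,\cdot),y(p,\cdot))\not\equiv 0$ so that such variations with nonzero period exist; if the differential could degenerate as $p$ varies, one uses compactness of $P$ and a finite refinement of the cover to handle each chart where a different component of $d(x,y)$ is nonvanishing. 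Everything else — openness, partition of unity, bookkeeping of regularity — is routine.
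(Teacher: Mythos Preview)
Your approach is essentially the paper's: build a period-dominating spray at a fixed $p_0$ via Mergelyan approximation on the Runge set $C=\bigcup C_j$, then globalize using that surjectivity of \eqref{eq:derivative-period} is open in $p$ and $P$ is compact. Two cosmetic points: the paper's local construction is more direct---when $y_k$ is nonconstant it simply sets $\tilde x_k(u,\zeta)=x_k(u)+\sum_{j=1}^\ell g_j(u)\zeta_j$ with $g_j\in\Ascr^r(M)$ Mergelyan-chosen so that $\int_{C_i} g_j\,dy_k\approx\delta_{ij}$, which sidesteps your detour through a point $u_0$ \emph{off} the curves (a bump supported away from $C$ has zero periods, so that passage should be dropped in favor of your ``concretely'' clause); and in the parametric step the paper just takes the direct sum of the local sprays over $\C^{m\ell}$, no partition of unity needed, since for each $p$ one block already surjects.
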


A map $(\tilde x,\tilde y)$ with surjective differential \eqref{eq:derivative-period} is called a 
{\em period dominating holomorphic spray} of maps $P\times M\to\C^{2n}$
with the core $(\tilde x(\cdotp,\cdotp,0),\tilde x(\cdotp,\cdotp,0)) =(x,y)$.

Note that continuity of a map $(x,y)\colon P\times M  \to\C^{2n}$, which is holomorphic on the interior $\mathring M$ for each $p\in P$, implies continuity of its $M$-derivative of any order on $P\times \mathring M$.  Since the period basis for $M$ is supported in $\mathring M$, 
the lemma holds under this weaker assumption, which already ensures 
continuity of the period map \eqref{eq:derivative-period}.
However, we shall use the lemma in the more general situation when 
$M$ is an admissible set (see Remark \ref{rem:perioddominatingsprays}). 
Since such sets may include arcs, we need the stronger hypothesis 
that the differential is continuous in all variables.

\begin{proof}
Without loss of generality, we assume that the Riemann surface $M$ is connected.  
When $P=\{p\}$ is a singleton, a spray with these properties  was obtained 
in \cite[proof of Theorem 3.3]{AlarconForstnericLopez2016Legendrian}. (We drop $P$ from the notation.)
An inspection of that proof  shows that there exists a spray of this type, with $N=\ell=\mathrm{rank}\, H_1(M;\Z)$, 
such that all but one of its component functions $\tilde x_j,\tilde y_j$ are independent of 
$\zeta\in \C^\ell$. For example, if $y_k$ is nonconstant, 
there is a map $(\tilde x,\tilde y)$ satisfying \eqref{eq:derivative-period} such that for all $u\in M$ and $\zeta\in \C^\ell$ we have
\begin{eqnarray}
	\tilde y(u,\zeta)     &=& y(u), \nonumber \\
	\tilde x_j(u,\zeta)  &=& x_j(u) \quad \text{for} \quad  j\in\{1,\ldots,n\}\setminus\{k\}, \nonumber \\ 
	\tilde x_k(u,\zeta) &=& x_k(u) + \sum_{j=1}^\ell g_j(u)\zeta_j,  \label{eq:X-spray1}
\end{eqnarray}
where the functions $g_1,\ldots, g_\ell\in \Ascr^r(M)$ are chosen 
such that $\int_{C_i} g_j\, dy_k$ approximates the Kronecker symbol $\delta_{i,j}$ for $i,j=1,\ldots, \ell$. 
The approximation can be as close as desired.  One first constructs smooth functions $g_{j}$
on the curves $C_i$ in the homology basis such that $\int_{C_i} g_j\, dy_k = \delta_{i,j}$
and then applies Mergelyan's theorem to obtain functions in $\Ascr^r(M)$.
Similarly, if $x_k$ is nonconstant but $y_k$ is constant, the goal is accomplished by 
letting $\tilde y_k(u,\zeta)=y_k + \sum_{j=1}^\ell g_j(u)\zeta_j$ for suitably chosen functions
$g_1,\ldots, g_\ell\in \Ascr^r(M)$, while the other components of the map 
are independent of $\zeta\in\C^\ell$.

To obtain the parametric case, we observe that the nonparametric
case for a given parameter value $p_0\in P$ automatically satisfies the domination condition 
\eqref{eq:derivative-period} for all points $p$ in an open neighborhood $U\subset P$ of $p_0$. 
Since $P$ is compact, finitely many such neighborhoods $U_1,\ldots, U_m$ cover $P$, 
and it suffices to combine the associated sprays, each with the parameter space $\C^\ell$,
into a single spray with the parameter space $\C^{m\ell}$.
\end{proof}

%
%
\begin{remark}[Admissible sets] \label{rem:perioddominatingsprays}
Lemma \ref{lem:perioddominatingsprays} also holds, with the same proof, 
if $M$ is a compact {\em admissible set} in an open Riemann
surface $\wt M$; see \cite[Definition 5.1]{AlarconForstnericLopez2016MZ}. 
This means that $M=K\cup \Gamma$,
where $K=\bigcup_j K_j$ is a union of finitely many pairwise disjoint, compact, smoothly bounded domains 
$K_j$ in $\wt M$ and $\Gamma=\bigcup_i \Gamma_i$ is a union of finitely many pairwise disjoint smooth arcs or closed 
curves that intersect $K$ only in their endpoints, or not at all, and such that their intersections with the boundary 
$bK$ are transverse. By Mergelyan's theorem 
\cite{Mergelyan1951DAN}, every function $f\in \Ascr^r(M)$, $r\geq 0$,
can be approximated in the $\Cscr^r(M)$-topology by 
functions holomorphic on a neighborhood of $M$. 
If in addition $M$ is Runge ($\Oscr(\wt M)$-convex) in $\wt M$, which holds if and only if the inclusion map $M\hra \wt M$ induces an injective homomorphism $H_1(M;\Z)\hra H_1(\wt M;\Z)$, then the approximation is
possible by functions holomorphic on $\wt M$. 
\end{remark}

An application of Lemma \ref{lem:perioddominatingsprays} and the implicit function theorem give the following structure theorem for the spaces $\Iscr^r_*(M,\C^{2n})$ and $\Lscr^r(M,\C^{2n+1})$.

%
%
\begin{theorem}\label{th:Banach}
Let $M$ be a compact bordered Riemann surface. For every $r\geq 1$, the spaces
$\Iscr^r_*(M,\C^{2n})$ and $\Lscr^r(M,\C^{2n+1})$ are complex Banach manifolds.
\end{theorem}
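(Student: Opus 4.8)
The plan is to exhibit $\Iscr^r_*(M,\C^{2n})$ as the zero set of a holomorphic map between complex Banach spaces whose differential is surjective with complemented kernel at every point of the zero set, apply the holomorphic implicit function theorem, and then transport the resulting structure to $\Lscr^r(M,\C^{2n+1})$ through the homeomorphism \eqref{eq:homeo}. We may assume $M$ connected (otherwise argue componentwise). If $\ell=\mathrm{rank}\,H_1(M;\Z)=0$, i.e.\ $M=\overline{\D}$, there is nothing to prove, since then $\Iscr^r_*(M,\C^{2n})=\Iscr^r(M,\C^{2n})$ is an open subset of the complex Banach space $\Ascr^r(M)^{2n}$: the immersion condition is open. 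So assume $\ell\geq 1$. First I would record that $\Iscr^r(M,\C^{2n})\subset\Ascr^r(M)^{2n}$ is open, and that the period map $\Pcal\colon\Ascr^r(M)^{2n}\to\C^\ell$ of \eqref{eq:P} is holomorphic: each $\Pcal_j(x,y)=\int_{C_j}x\,dy$ is the composition of the bounded bilinear map $(x,y)\mapsto\sum_i x_i\,dy_i\in\Ascr^{r-1}(M)$ with the bounded linear functional $\omega\mapsto\int_{C_j}\omega$, and a continuous bilinear map between Banach spaces is holomorphic. By \eqref{eq:periodmap} and the remarks following it, $\Iscr^r_*(M,\C^{2n})=\Iscr^r(M,\C^{2n})\cap\Pcal^{-1}(0)$.

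The crucial step is to show that $\Pcal$ is a holomorphic submersion along $\Pcal^{-1}(0)$. Fix $(x,y)\in\Iscr^r_*(M,\C^{2n})$; being an immersion of the connected surface $M$, it is nonconstant, so Lemma \ref{lem:perioddominatingsprays} with a one-point parameter space provides a period-dominating holomorphic spray with core $(x,y)$. By the explicit form \eqref{eq:X-spray1} of that spray the associated map $s\colon\C^\ell\to\Ascr^r(M)^{2n}$, $s(\zeta)=(\tilde x(\cdot,\zeta),\tilde y(\cdot,\zeta))$, is affine with $\Ascr^r(M)$-valued coefficients, hence holomorphic, with $s(0)=(x,y)$, and the lemma asserts precisely that $d(\Pcal\circ s)_0\colon\C^\ell\to\C^\ell$ is an isomorphism. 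Since $d(\Pcal\circ s)_0=d\Pcal_{(x,y)}\circ ds_0$ by the chain rule, $d\Pcal_{(x,y)}\colon\Ascr^r(M)^{2n}\to\C^\ell$ is surjective. Its kernel is a closed subspace of finite codimension $\ell$, hence complemented: choosing $v_1,\dots,v_\ell$ with $d\Pcal_{(x,y)}v_i=e_i$, the space $\span\{v_1,\dots,v_\ell\}$ is a closed complement with bounded projection. The holomorphic implicit function theorem for Banach spaces now yields, in a neighbourhood of $(x,y)$, a biholomorphism of $\Pcal^{-1}(0)$ onto an open subset of $\ker d\Pcal_{(x,y)}$; intersecting with the open set $\Iscr^r(M,\C^{2n})$ preserves this. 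Hence $\Iscr^r_*(M,\C^{2n})$ is a complex Banach manifold.

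Finally, the homeomorphism \eqref{eq:homeo}, $(x,y,z)\mapsto(x,y,z(u_0))$, has inverse $(x,y,c)\mapsto(x,y,z)$, where $z$ is the function determined by \eqref{eq:z-component} with $z(u_0)=c$; this inverse depends holomorphically on $(x,y)$, the primitive in \eqref{eq:z-component} being an $\Ascr^r(M)$-valued bounded bilinear, hence holomorphic, function of $(x,y)$. Thus $\Lscr^r(M,\C^{2n+1})$ is biholomorphic to $\Iscr^r_*(M,\C^{2n})\times\C$ and is therefore also a complex Banach manifold.

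The point requiring the most care is the passage from Lemma \ref{lem:perioddominatingsprays} to surjectivity of the Banach-space differential $d\Pcal_{(x,y)}$: one must confirm that the spray gives a genuinely holomorphic map $s$ into $\Ascr^r(M)^{2n}$ (which the explicit affine form \eqref{eq:X-spray1} makes transparent), that the period integrals may be differentiated under, and that the finite-codimensional kernel splits so that the holomorphic implicit function theorem applies. None of these is deep, but together they are the substance of the argument; everything else is bookkeeping with Banach-space calculus.
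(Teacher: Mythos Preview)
Your proof is correct and follows essentially the same approach as the paper's: show that the period map $\Pcal$ is holomorphic, invoke Lemma~\ref{lem:perioddominatingsprays} with a one-point parameter space to see that $d\Pcal$ is surjective at every nonconstant map, and apply the implicit function theorem. You supply more detail than the paper does---in particular the explicit verification that the kernel splits and that the homeomorphism \eqref{eq:homeo} is in fact a biholomorphism---but the argument is the same.
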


\begin{proof}
In view of the homeomorphism $\Lscr^r(M,\C^{2n+1}) \to \Iscr^r_*(M,\C^{2n}) \times \C$ induced by the map 
\eqref{eq:homeo}, it suffices to show that $\Iscr^r_*(M,\C^{2n})$ is a closed complex Banach submanifold of 
$\Iscr^r(M,\C^{2n})$, the latter being an open subset of the complex Banach space $\Ascr^r(M)^{2n}$.

Obviously, $\Iscr^r_*(M,\C^{2n})=\{\sigma\in \Iscr^r(M,\C^{2n}): \Pcal(\sigma)=0\}$ is a closed subset of $\Iscr^r(M,\C^{2n})$. 
The period map $\Pcal\colon  \Ascr^r(M)^{2n} \to \C^{\ell}$ is holomorphic. Lemma \ref{lem:perioddominatingsprays} 
(with $P$ a singleton) says that $\Pcal$ has maximal rank $\ell$ at each point $\sigma \in \Ascr^r(M)^{2n}$ that 
represents a nonconstant map. Hence, the conclusion follows from the implicit function theorem.
\end{proof}

It is easily seen that the tangent space to the submanifold $\Iscr^r_*(M,\C^{2n})$ of $\Iscr^r(M,\C^{2n})$ at the point
$\sigma_0=(x_0,y_0)\in \Iscr^r_*(M,\C^{2n})$ equals
\[
	T_{\sigma_0}\Iscr^r_*(M,\C^{2n}) =  \bigl\{\sigma=(x,y)\in \Ascr^r(M)^{2n} : 
	\int_{C_j} xdy_0 + x_0dy=0,\ \ j=1,\ldots,l\bigr\},
\]
where the curves $C_1,\ldots,C_l$ form a basis of $H_1(M;\Z)$.

\section{An application of the convex integration lemma}\label{sec:CI-lemma}

In this section, we establish a key technical result, Lemma \ref{lem:CI}, which will be used
in the proof of Theorem \ref{th:parametric} in order to extend families
of Legendrian immersions across a smooth arc attached to a compact smoothly bounded domain in 
a Riemann surface. 

Let $P$ be a compact Hausdorff space; it will serve as the parameter space. Let 
$\Cscr^{0,1}(P\times [0,1])$ denote the space of all continuous functions $f\colon P\times [0,1]\to\C$, 
considered as a family of paths $f_p=f(p,\cdotp)\colon [0,1]\to \C$ depending 
continuously on  $p\in P$, whose derivative $\dot f_p(s)=df_p(s)/ds$ 
is also continuous in both variables $(p,s)\in P\times [0,1]$. The analogous notation
\[	\Cscr^{0,1}(P\times [0,1],\C^n) = \Cscr^{0,1}(P\times [0,1])^n \]
is used for maps $f=(f_1,\ldots,f_n) \colon P\times [0,1]\to\C^n$. 

We shall need the following lemma.

%
%
\begin{lemma}\label{lem:1dim}
Let $Q\subset P$ be compact Hausdorff spaces, and let 
$f\in \Cscr^{0,1}(P\times [0,1])$ and $h \in \Cscr(P\times [0,1])$  be complex valued functions, 
with $h$ nowhere vanishing. Write $f_p=f(p,\cdotp)$ and similarly for $h$. 
Let $ b\colon P\to \C$ be a continuous function such that
\[	 b(p) =  \int_0^1 f_p(s) h_p(s)\, ds, \quad  p\in Q.  \]
There is a homotopy $f^t\in \Cscr^{0,1}(P\times [0,1])$ $(t\in [0,1])$ satisfying the following conditions:
\begin{itemize}
\item[\rm (i)]     $f^t_p=f_p$ \ for all $(p,t) \in (P\times \{0\}) \cup (Q\times [0,1])$; 
\vspace{1mm}
\item[\rm (ii)]    $f^t_p(s)=f_p(s)$ and $\dot f^t_p(s)=\dot f_p(s)$ for $s=0,1$ and for all $(p,t)\in P\times [0,1]$;
\vspace{1mm}
\item[\rm (iii)]  $\int_0^1 f^1_p(s) h_p(s)\, ds  =  b(p)$  for all $p\in P$.
\end{itemize}    
\end{lemma}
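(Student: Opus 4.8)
The plan is to correct the given function by a perturbation supported away from the endpoints $s=0,1$, chosen so that the defect $b(p)-\int_0^1 f_p h_p\,ds$ (which vanishes on $Q$ by hypothesis) is absorbed. First I would fix a bump function $\chi\in\Cscr^\infty([0,1])$ with $\chi\equiv 0$ near $s=0$ and near $s=1$, and with $\int_0^1 \chi(s)\,ds\neq 0$; more importantly I need $\int_0^1 \chi(s)h_p(s)\,ds$ to be nonzero for every $p\in P$. Since $h$ is continuous on the compact set $P\times[0,1]$ and nowhere vanishing, $h_p$ has a continuous nonvanishing argument locally, but globally the integral $\int_0^1 \chi h_p\,ds$ could in principle vanish for some $p$ if $h_p$ winds. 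To fix this robustly, I would instead allow $\chi$ to depend on $p$: by a partition-of-unity argument over a finite cover of $P$, on each patch one can pick a fixed smooth bump $\chi_j$ supported in $(0,1)$ such that $\int_0^1 \chi_j h_p\,ds\neq 0$ for $p$ in that patch (choosing the support inside a subinterval on which $h_p$ is nearly constant, for $p$ near a given parameter value), and then glue. This yields a continuous function $c(p):=\int_0^1 \chi_p(s)h_p(s)\,ds$ that is nowhere zero on $P$, where $\chi_p$ is the glued family, which is $\Cscr^\infty$ in $s$ with derivatives continuous in $(p,s)$ and vanishing near $s=0,1$.

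Next, let $e(p):=b(p)-\int_0^1 f_p(s)h_p(s)\,ds$; this is continuous on $P$ and $e\equiv 0$ on $Q$. Choose (by the Tietze-type retraction, or simply a continuous Urysohn-type function, using normality of the compact Hausdorff pair $Q\subset P$) a continuous function $\lambda\colon P\to[0,1]$ with $\lambda\equiv 0$ on a neighborhood of $Q$ — actually $\lambda\equiv 0$ on $Q$ suffices, but I may want it to equal $1$ off a neighborhood; in any case $\lambda$ is continuous with $\lambda|_Q=0$. Then define the homotopy
\[
	f^t_p(s) \;=\; f_p(s) \;+\; t\,\lambda(p)\,\frac{e(p)}{c(p)}\,\chi_p(s),\qquad (p,s,t)\in P\times[0,1]\times[0,1].
\]
By construction $f^t\in\Cscr^{0,1}(P\times[0,1])$: the added term and its $s$-derivative are continuous in all three variables, since $\chi_p$ and $\dot\chi_p$ are continuous in $(p,s)$ and $e/c$, $\lambda$ are continuous in $p$. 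Condition (i) holds because the correction term vanishes when $t=0$ and vanishes when $p\in Q$ (as $\lambda(p)=0$, or even $e(p)=0$). Condition (ii) holds because $\chi_p(s)=0$ and $\dot\chi_p(s)=0$ for $s$ near $0$ and near $1$, in particular at $s=0,1$, so the correction and its derivative vanish there for every $p,t$. Finally, for (iii), at $t=1$ and any $p\in P$,
\[
	\int_0^1 f^1_p(s)h_p(s)\,ds \;=\; \int_0^1 f_p h_p\,ds \;+\; \lambda(p)\,\frac{e(p)}{c(p)}\int_0^1 \chi_p(s)h_p(s)\,ds \;=\; \int_0^1 f_p h_p\,ds \;+\; \lambda(p)\,e(p).
\]
For $p\in Q$ this equals $\int_0^1 f_p h_p\,ds=b(p)$; for $p\notin Q$ we need $\lambda(p)=1$, so I should choose $\lambda$ to be continuous with $\lambda|_Q=0$ and $\lambda\equiv 1$ outside an arbitrarily small neighborhood $V$ of $Q$ — but then (iii) could fail on $V\setminus Q$. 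The cleaner fix: since $e\equiv 0$ on $Q$ and $e$ is continuous, replace $e$ by $e$ itself and demand (iii) only where it is forced; re-reading, (iii) is required for \emph{all} $p\in P$, so I instead take $\lambda\equiv 1$ on all of $P$ and rely on $e|_Q=0$ alone to get (i) on $Q\times[0,1]$. Thus set $\lambda\equiv 1$; then (i) on $Q\times[0,1]$ follows from $e(p)=0$, and (iii) gives $\int_0^1 f^1_p h_p\,ds=\int_0^1 f_p h_p\,ds+e(p)=b(p)$ for all $p\in P$, as required.

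The main obstacle is the construction of the family $\chi_p$ with $\int_0^1\chi_p h_p\,ds$ nowhere vanishing and with the stated joint continuity of $\chi_p$ and $\dot\chi_p$; once that is in hand the rest is a one-line algebraic correction. This is exactly where the compactness of $P$ and the nonvanishing of $h$ are used: locally one exploits that $h_p$ is nearly constant on a short interval to make the integral manifestly nonzero, and then one patches finitely many such local choices together with a continuous partition of unity on $P$, noting that a convex combination of the functions $\int_0^1\chi_j h_p\,ds$ need not be nonzero — so instead of convex-combining the \emph{integrals} one should keep the supports of the $\chi_j$ in disjoint subintervals of $(0,1)$, so that $\int_0^1(\sum_j\rho_j(p)\chi_j)h_p\,ds=\sum_j\rho_j(p)\int_{I_j}\chi_j h_p\,ds$ is a sum of terms each of which, where $\rho_j(p)>0$, has a controlled argument, and a finer partition makes the total nonzero. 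Arranging the disjoint-support bookkeeping is the only genuinely technical point; everything else is routine.
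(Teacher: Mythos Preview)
Your approach is correct and is in fact more direct than the paper's, which simply identifies the statement as a parametric version of Gromov's one-dimensional convex integration lemma and refers to \cite[Theorem 3.4]{Spring2010} for the parametric case. Since here the target is all of $\C$ (there is no open-set constraint on the values of $f$), convex integration is overkill, and your explicit additive correction $f^t_p = f_p + t\,\dfrac{e(p)}{c(p)}\,\chi_p$ does the job cleanly. Setting $\lambda\equiv 1$ and relying on $e|_Q=0$ for condition (i) is the right call.

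The one place where you make life harder than necessary is the construction of $\chi_p$ with $c(p)=\int_0^1\chi_p h_p\,ds$ nowhere vanishing. You worry that a partition-of-unity combination $\sum_j\rho_j(p)\int\chi_j h_p\,ds$ of nonzero complex numbers may vanish, and your proposed disjoint-support fix with ``controlled argument'' is left vague; as stated it is not clear it closes. But none of this is needed: a \emph{single} bump $\chi$, independent of $p$, already works. Since $h$ is continuous on the compact set $P\times[0,1]$, it is uniformly continuous and $|h|\ge m>0$. Pick $s_0\in(0,1)$ and $\delta>0$ so small that $|h_p(s)-h_p(s_0)|<m/2$ for all $p\in P$ and $|s-s_0|<\delta$; then for any smooth $\chi\ge 0$ supported in $(s_0-\delta,s_0+\delta)\subset(0,1)$ with $\int\chi=1$ one has
\[
\Bigl|\int_0^1\chi(s)h_p(s)\,ds - h_p(s_0)\Bigr|\le \int\chi(s)\,|h_p(s)-h_p(s_0)|\,ds < \tfrac{m}{2},
\]
hence $|c(p)|\ge m/2$ for every $p\in P$. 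With this $\chi$ the regularity $f^t\in\Cscr^{0,1}(P\times[0,1])$ and conditions (i)--(iii) are immediate, and the ``only genuinely technical point'' disappears.
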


\begin{proof}
This is a parametric version of Gromov's one-dimensional 
{\em convex integration lemma} \cite[Lemma 2.1.7]{Gromov1973IZV}. The basic version of
Gromov's lemma says that for any open connected set $\Omega$ in a Euclidean space $\R^n$
(or in a Banach space), the set of integrals $\int_0^1 f(s)ds$ over all paths $f\colon [0,1]\to \Omega$,
with fixed endpoints $f(0)$ and $f(1)$ in $\Omega$, equals the convex hull of $\Omega$.  
It is a trivial matter to adapt it to arcs of class $\Cscr^1$
with the matching conditions for the derivatives at the endpoints of $[0,1]$. 
For the parametric version we refer to \cite[Theorem 3.4]{Spring2010}. 
The nowhere vanishing function $h$ plays the role of a weight; it would suffice
to assume that $h$ is not identically zero and work on the corresponding subinterval.
\end{proof}

In preparation for the next lemma, we need some additional notation.
Given $z=(z_1,\ldots,z_n),\ w=(z_1,\ldots,z_n)\in \C^n$, we write 
$zw= \sum_{j=1}^n z_j w_j$. We denote by 
\begin{equation}\label{eq:In}
	\Iscr(P\times [0,1],\C^n) \subset \Cscr^{0,1}(P\times [0,1],\C^n) 
\end{equation}
the set of all $f \in  \Cscr^{0,1}(P\times [0,1],\C^n)$ for which 
the derivative $\dot f_p(s)=df_p(s)/ds \in\C^n$ is nowhere vanishing on $(p,s)\in P\times [0,1]$.
We  think of  $f \in \Iscr(P\times [0,1],\C^n)$ as a family of immersed arcs $f_p\colon [0,1]\to\C^n$
depending continuously on the parameter $p\in P$.

The following is the main technical lemma used in the proof of Theorem \ref{th:parametric}.


\begin{lemma}\label{lem:CI}
Let $Q\subset P$ be compact Hausdorff spaces, let $\xi=(f,g)\in \Iscr(P\times [0,1],\C^{2n})$
with $f,g\in \Cscr^{0,1}(P\times [0,1])^n$, and let $\beta\colon P\to \C$ be a continuous function such that
\begin{equation}
\label{eq:alpha_p}
	\beta(p) =  \int_0^1 f_p(s) \dot g_p(s)  ds, \quad  p\in Q.
\end{equation}
Then there exists a homotopy $\xi^t=(f^t,g^t)\in \Iscr(P\times [0,1],\C^{2n})$  
$(t\in [0,1])$ satisfying the following conditions:
\begin{itemize}
\item[\rm (a)]     $\xi^t_p=\xi_p$ \ for $(p,t) \in (P\times \{0\}) \cup (Q\times [0,1])$; 
\vspace{1mm}
\item[\rm (b)]     $\xi^t_p(s)=\xi_p(s)$ and  $\dot \xi^t_p(s)=\dot \xi_p(s)$
for $s=0,1$ and $(p,t)\in P\times [0,1]$;
\vspace{1mm}
\item[\rm (c)]    $\int_0^1 f^1_p(s) \dot g^1_p(s)  ds =\beta(p)$ \ for $p\in P$.
\end{itemize}    
\end{lemma}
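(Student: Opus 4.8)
The plan is to reduce Lemma \ref{lem:CI} to the one-dimensional weighted convex integration statement of Lemma \ref{lem:1dim}. The obstacle to applying Lemma \ref{lem:1dim} directly is that the functional in (c) is bilinear in $(f,g)$, whereas Lemma \ref{lem:1dim} controls a functional that is \emph{linear} in its single argument $f$ against a fixed weight $h$. So the first step is to decide which of $f$ or $g$ to deform and which to freeze as (part of) the weight. Since $\xi=(f,g)\in\Iscr(P\times[0,1],\C^{2n})$ means only that the full derivative $(\dot f_p,\dot g_p)\in\C^{2n}$ is nowhere zero, not each block separately, I would begin by a preliminary harmless modification: after a small homotopy fixing values and derivatives at $s=0,1$ and fixing everything over $Q$, arrange that $\dot g_p(s)\neq 0$ for all $(p,s)$, or at least that on each fiber some component $\dot g_{p,k}$ is not identically zero; the existence of such a modification is exactly the kind of general-position/partition-of-unity argument that Lemma \ref{lem:1dim}'s last sentence alludes to (``it would suffice to assume that $h$ is not identically zero''). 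Once $\dot g_p$ is a usable weight, I will deform only $f$, keeping $g$ (hence $\dot g$) fixed throughout the homotopy; then automatically $\dot\xi^t_p=(\dot f^t_p,\dot g_p)$ stays nowhere vanishing because its $g$-block is, so the immersion condition $\xi^t\in\Iscr(P\times[0,1],\C^{2n})$ comes for free and conclusion (b) for the $g$-component is trivial.

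With $g$ frozen, apply Lemma \ref{lem:1dim} componentwise, or rather with vector-valued $f$ and the scalar weight chosen appropriately. Concretely, $\int_0^1 f_p\dot g_p\,ds=\sum_{j=1}^n\int_0^1 f_{p,j}(s)\dot g_{p,j}(s)\,ds$; so I would pick one index $k$ for which $\dot g_{p,k}$ is not identically zero on each fiber (after the preliminary step this can be arranged uniformly, shrinking to a subinterval of $[0,1]$ if needed as in the lemma), freeze $f_{p,j}$ for $j\neq k$, and deform only $f_{p,k}$ against the weight $h_p:=\dot g_{p,k}$, with target value $b(p):=\beta(p)-\sum_{j\neq k}\int_0^1 f_{p,j}\dot g_{p,j}\,ds$. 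Note $b$ is continuous on $P$ and equals $\int_0^1 f_{p,k}h_p\,ds$ for $p\in Q$ by \eqref{eq:alpha_p}, so the hypothesis of Lemma \ref{lem:1dim} is met. The resulting homotopy $f^t_{p,k}$ has the required properties (i)--(iii) of Lemma \ref{lem:1dim}, which translate directly into (a)--(c): (i) gives (a), (ii) gives the $f_k$-part of (b) while the other components of $f$ and all of $g$ are unchanged, and (iii) is exactly (c).

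The regularity bookkeeping must be checked: Lemma \ref{lem:1dim} produces $f^t\in\Cscr^{0,1}(P\times[0,1])$ with joint continuity of the function and its $s$-derivative, and a homotopy parameter that enters continuously; combining the deformed $k$-th component with the static remaining components keeps us in $\Cscr^{0,1}(P\times[0,1])^n$, and since $\dot g_p$ never vanishes the pair stays in $\Iscr(P\times[0,1],\C^{2n})$ for every $t$. The only genuinely delicate point is the preliminary step of making some $\dot g_{p,k}$ globally available as a weight while preserving the data over $Q$ and the endpoint jets: if on some fiber $p$ the whole block $\dot g_p$ vanishes identically (so all the ``immersion'' is carried by $\dot f_p$), one cannot use $g$ as the weight there. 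I would handle this by first swapping the roles of $f$ and $g$ locally in $p$ — on the closed set where $\dot g_p\equiv 0$ we instead freeze $g$ in its new form and deform... no: rather, I would do a first convex-integration step that perturbs $g$ (against the weight $\dot f_{p,k}$, which is nonzero somewhere since $\xi$ is an immersion) to make $\dot g_p$ not identically zero on every fiber, while not yet worrying about the value of $\beta$; this is a one-dimensional convex integration into the punctured line $\C^*$ and is again an instance of Lemma \ref{lem:1dim} with target an arbitrary nearby value. Only after $g$ has been thus regularized do I run the main step above. I expect this two-stage structure — first fix up $g$ so it can serve as weight, then do the real convex integration on $f$ — to be the crux of the argument; everything else is the routine translation between Lemma \ref{lem:1dim} and the bilinear functional.
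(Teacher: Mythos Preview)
Your preliminary step is the gap. You want a single global index $k$ with $\dot g_{p,k}$ not identically zero in $s$ for every $p\in P$, but you are required to keep $g$ fixed over $Q$, and nothing prevents $g_{p_0}$ from being constant for some $p_0\in Q$ (with the immersion carried entirely by $\dot f_{p_0}$). For such $p_0$ the weight $h_{p_0}=\dot g_{p_0,k}$ is identically zero; neither Lemma \ref{lem:1dim} nor its weakened form applies there, and near $p_0$ the weight degenerates, so there is no reason the convex-integration deformation of $f$ stays bounded and continuous as $p\to p_0$. Your proposed fix --- perturbing $g$ first ``by an instance of Lemma \ref{lem:1dim}'' --- is not an instance of that lemma (which adjusts integrals against a fixed weight, not derivatives of the deformed map), and in any case cannot change $g$ over $Q$.

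The paper sidesteps the preliminary step altogether and implements precisely the idea you mention and then abandon: swap the roles of $f$ and $g$ \emph{locally in $p$}. By compactness of $P\times[0,1]$ one finds finitely many compact sets $U_j\subset V_j\subset P$ covering $P$, pairwise disjoint closed subintervals $I_j\subset[0,1]$, and indices $k(j)\in\{1,\ldots,2n\}$ with $\dot\xi_{p,k(j)}\neq 0$ on $V_j\times I_j$; this uses only the given immersion hypothesis, no perturbation. One then performs $m$ successive applications of Lemma \ref{lem:1dim}, the $j$-th supported on $I_j$ and cut off in $p$ outside $V_j$. When $k(j)>n$ (a $\dot g$-component is nonzero) one deforms the matching $f$-component exactly as you describe; when $k(j)\le n$ one integrates by parts, $\int_0^1 f_{p,k}\dot g_{p,k}\,ds = f_{p,k}g_{p,k}\big|_0^1 - \int_0^1 g_{p,k}\dot f_{p,k}\,ds$, and deforms $g_{p,k}$ against the weight $\dot f_{p,k}$ instead. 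After each step one enlarges $Q$ to include the already-handled $U_j$'s; disjointness of the $I_j$'s guarantees that later steps neither destroy the immersion condition nor disturb the integrals already achieved.
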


In \cite[Lemma 3.1]{ForstnericLarusson2016} we give more precise analogues of Lemmas \ref{lem:1dim} and \ref{lem:CI} 
by controlling the integrals in (iii) and (c) for all $t\in [0,1]$. This can be proved here as well, but is not needed for
the application in the present paper.

\begin{proof}
Since the derivative $\dot \xi_p(s)=(\dot \xi_{p,1}(s),\ldots,\dot \xi_{p,2n}(s))\in\C^{2n}$ 
is nowhere vanishing on $(p,s)\in P\times [0,1]$ and $P$ is compact, 
an elementary argument gives finitely many pairs of compact sets $U_j\subset V_j$ in $P$
$(j=1,\ldots,m)$, with $U_j\subset \mathring V_j$ and $\bigcup_{j=1}^m U_j=P$, 
and pairwise disjoint closed segments $I_1,\ldots, I_m$ contained in $[0,1]$ 
such that for every $j=1,\ldots,m$, there exists an index 
$k=k(j)\in \{1,2,\ldots,2n\}$ such that 
\begin{equation}\label{eq:nonzero}
	\dot \xi_{p,k}(s)\ne 0\quad\text{for all}\ \ s\in I_j\ \text{and}\ p\in V_j. 
\end{equation}

The proof of the lemma proceeds by a finite induction on $j=1,\ldots,m$.
The desired homotopy is obtained as a composition of $m$ homotopies, each supported
on one of the segments $I_1,\ldots,I_m$. We explain the initial step; the subsequent steps are 
analogous. 

Thus, let $j=1$ and let $k=k(1)\in  \{1,2,\ldots,2n\}$ be such that  \eqref{eq:nonzero} 
holds for $j=1$. Suppose first that $k\in \{n+1,\ldots,2n\}$. Write $k=n+l$ with $l\in \{1,\ldots,n\}$.  
Recall that $\xi=(f,g)$ where $f,g\in \Cscr^{0,1}(P\times [0,1])^n$. 
Then \eqref{eq:nonzero}  means that the function $\dot g_{p,l}$ is nowhere vanishing on 
$I_1$ for all $p\in V_1$. Let us define the function $b \colon P\to \C$ by
\begin{equation}\label{eq:betapt}
	b(p)=\beta(p) - \int_{[0,1]\setminus I_1}f_{p,l}(s) \dot g_{p,l}(s)  ds  
	-   \int_0^1 \sum_{\stackrel{i=1}{i\ne l}}^n f_{p,i}(s) \dot g_{p,i}(s)  ds. 
\end{equation}
In view of \eqref{eq:alpha_p} we have that 
\[
	 b(p)=\int_{I_1}f_{p,l}(s) \dot g_{p,l}(s)  ds, \quad  p\in Q. 
\]
We now apply Lemma \ref{lem:1dim} with $Q\subset P$ replaced by the pair of 
parameter sets $V_1 \cap Q \subset V_1$, the interval $[0,1]$ replaced by the segment $I_1$,
with the functions on $I_1$ given by 
\[
	f_p=f_{p,l},\quad h_p=\dot g_{p,l} \quad\text{for $p\in V_1$},
\]
and with the function $b$ given by \eqref{eq:betapt}.
(When applying Lemma  \ref{lem:1dim}, we pay attention to the matching condition (ii) 
at the endpoints of the interval $I_1$).
%
%
This gives a homotopy $f^{t}_{p,l} \in  \Cscr^{0,1}(V_1\times [0,1])$ $(t\in [0,1])$ 
satisfying the following conditions:
\begin{itemize}
\item[\rm (a')]     $f^t_{p,l}=f_{p,l}$ \ for all $(p,t) \in (V_1\times \{0\}) \cup ((Q\cap V_1) \times [0,1])$; 
\vspace{1mm}
\item[\rm (b')]    $f^t_{p,l}(s)=f_{p,l}(s)$ for all $s=[0,1]\setminus I_1$ and $(p,t)\in V_1 \times [0,1]$;
\vspace{1mm}
\item[\rm (c')]    $\int_{I_1} f^1_{p,l}(s) \dot g_{p,l}(s)  ds  = b(p)$  for all $p\in V_1$.
\end{itemize}    
Condition (b') means that the deformation is supported on the segment $I_1$.

Let $\xi^t_p=(f^t_p,g_p)\colon [0,1]\to \C^{2n}$  $(t\in [0,1])$ denote the homotopy whose 
$l$-th component equals $ f^t_{p,l}$ and whose other components agree with the corresponding
components of $\xi_p$. Note that $\xi^t_p$ agrees with $\xi_p$ on $[0,1]\setminus I_1$ 
for all $t\in [0,1]$ and $p\in V_1$, and hence is an immersion 
(since its component $\dot g_{p,l}$ is nowhere vanishing on $I_1$
and $\xi^t_p=\xi_p$ on $[0,1]\setminus I_1$).
Clearly, $\xi^t_p$ satisfies conditions (a) and (b) in Lemma \ref{lem:CI} for  
$(p,t) \in (V_1\times \{0\}) \cup ((Q\cap V_1) \times [0,1])$, 
and it satisfies condition (c) for all $p\in V_1$ in view of the definition \eqref{eq:betapt} of the function $b$.

Pick a continuous function $\chi\colon P\to [0,1]$ such that $\chi=1$ 
on $U_1$ and $\supp\,\chi \subset \mathring V_1$.
Replacing $f^t_p$ by $f^{\chi(p)t}_p$ and $\xi^t_p$ by $\xi^{\chi(p)t}_p$ yields a homotopy, defined for all  $p\in P$,
which satisfies conditions (a) and (b), and it satisfies condition (c) for $p\in U_1$. 

This concludes the first step if $k(1)\in \{n+1,\ldots,2n\}$.
If on the other hand $k=k(1)\in \{1,\ldots,n\}$, we apply the same argument 
with the roles of the components reversed, using the integration by parts formula
\[
	\int_0^1 f_{p,k}(s) \dot g_{p,k}(s) \, ds = f_{p,k}(1)g_{p,k}(1) - f_{p,k}(0)g_{p,k}(0)
	- \int_0^1  g_{p,k}(s) \dot f_{p,k}(s)  \, ds. 
\]
In this case, the assumption is that $\dot f_{p,k}(s)\ne 0$ for all $s\in I_1$ for $p\in V_1$.
The same argument as above gives a homotopy $g^t_{p,k}$, supported on $I_1$,
which achieves condition (c) for all $p\in U_1$. As before, the other components of the map
are kept fixed. 

This concludes the first step of the induction. 

In the second step with $j=2$, we take  as our datum the map $\xi^1\in \Iscr(P\times [0,1],\C^{2n})$ 
(the final map at $t=1$ in the homotopy obtained in step 1). By following the proof of step 1 with the pair 
of parameter sets $Q_2=Q\cup U_1 \subset P$, we find a family of immersions
\[
	\xi^{1,t}_p=(f^{1,t}_p,g^{1,t}_p)   \colon  [0,1]\to \C^{2n}, \quad 
	(p,t) \in P\times [0,1],
\]
satisfying the following conditions: 
\begin{itemize}
\item $\xi^{1,t}_p=\xi^{1}_p$ for $(p,t)\in  (P\times \{0\}) \cup (Q_2\times [0,1])$;
\vspace{1mm}
\item $\xi^{1,t}_p(s)=\xi^{1}_p(s)$ for all $s\in [0,1]\setminus I_2$ and $(p,t)\in P\times [0,1]$;
\vspace{1mm}
\item $\int_0^1 f^{1,1}_p(s) \dot g^{1,1}_p(s) ds =\beta(p)$ \ for all $p\in U_1\cup U_2$.
\end{itemize}
Since the deformation $\xi^{1,t}_p$ is supported on $I_2$ which is disjoint from $I_1$, 
it does not destroy the immersion property of the individual maps $[0,1]\to\C^{2n}$ in
the family. Also, since the deformation is fixed for $p\in Q_2=Q\cup U_1$, 
it does not change the values of the integrals in (c) for $p\in Q_2$, and in addition it achieves 
the correct values for points $p\in U_2$.

We now take $\xi^2=\xi^{1,1}\in \Iscr(P\times [0,1],\C^{2n})$ as the datum in step 3,
let $Q_3=Q_2\cup U_2$, and proceed as before. After $m$ steps of this kind, the proof is complete.
\end{proof}


\section{A parametric Oka principle for Legendrian immersions} \label{sec:hprinciple}

Let $M$ be an open Riemann surface. In this section we prove the parametric Oka principle with approximation
for the inclusion $\Iscr_*(M,\C^{2n}) \hookrightarrow  \Iscr(M,\C^{2n})$ in Theorem \ref{th:immersions}. 

Let $P$ be a compact Hausdorff space. We introduce the following mapping spaces:
\begin{eqnarray*}
	\Iscr(P\times M,\C^{2n}) &=& \{\sigma\in \Cscr(P\times M,\C^{2n}) : 
	\sigma_p \in \Iscr(M,\C^{2n})\ \text{for every}\ p\in P\}; \\
	\Iscr_*(P\times M,\C^{2n}) &=& \{\sigma\in \Iscr(P\times M,\C^{2n}) : 
	\sigma_p\in \Iscr_*(M,\C^{2n}) \ \ \text{for every}\ p\in P\}.
\end{eqnarray*}
Here, $\sigma_p=\sigma(p,\cdotp)\colon M\to \C^{2n}$. 
%
%
Given a compact set $K\subset M$, we write
\[
	\|\sigma\|_{1,P\times K}= \sup_{x\in K} |\sigma_p(x)| + \sup_{x\in K} |d\sigma_p(x)|
\]
where the norm $|d\sigma_p|$ of the differential  is measured with respect to a fixed Hermitian metric on $TM$
(whose precise choice will not be important) and the Euclidean norm on $\C^{2n}$.

%
%
\begin{theorem}\label{th:parametric}
Assume that $M$ is an open Riemann surface, $Q\subset P$ are compact Hausdorff spaces,  
$D\Subset M$ is a smoothly bounded domain whose closure $\bar D$ is $\Oscr(M)$-convex, 
and $\sigma=(x,y)\in \Iscr(P\times M,\C^{2n})$ $(n\ge 1)$ satisfies the following two conditions:
\begin{itemize}
\item[\rm (a)] $\sigma|_{Q\times M} \in \Iscr_*(Q\times M,\C^{2n})$;
\vspace{1mm}
\item[\rm (b)] there is an open set $U\subset M$, with $\bar D\subset U$, such that 
$\sigma|_{P\times U}\in \Iscr_*(P\times U,\C^{2n})$.
\end{itemize}
Given $\epsilon>0$, there is a homotopy $\sigma^t \in \Iscr(P\times M,\C^{2n})$ $(t\in [0,1])$ 
satisfying the following conditions:
\begin{itemize}
\item[\rm (1)]  $\sigma^t_p = \sigma_p$ \ for every $(p,t)\in (P\times \{0\}) \cup (Q\times [0,1])$; 
\vspace{1mm}
\item[\rm (2)]   $\sigma^t|_{P\times D} \in  \Iscr_*(P\times D,\C^{2n})$ for every $t \in [0,1]$;
\vspace{1mm}
\item[\rm (3)]   $\|\sigma^t - \sigma\|_{1,P\times \bar D} <\epsilon$ for every $t\in [0,1]$;
\vspace{1mm}
\item[\rm (4)]   $\sigma^1\in \Iscr_*(P\times M,\C^{2n})$. 
\end{itemize}
\end{theorem}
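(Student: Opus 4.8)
The plan is to exhaust $M$ by an increasing sequence of smoothly bounded $\Oscr(M)$-convex domains $D=D_0\Subset D_1\Subset D_2\Subset\cdots$ with $\bigcup_k D_k=M$, chosen so that each $\bar D_{k+1}$ is obtained from $\bar D_k$ by attaching either (i) a single smooth arc $\gamma$ with endpoints in $bD_k$ and otherwise disjoint from $\bar D_k$ (this changes the topology, raising $\mathrm{rank}\,H_1$ by one), or (ii) no handle at all, so that $\bar D_k$ is a deformation retract of $\bar D_{k+1}$ (a ``noncritical'' enlargement). In both cases the intermediate admissible set $S_k=\bar D_k\cup\gamma$ (or $S_k=\bar D_k$) is Runge in $M$. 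Then I would construct the homotopy $\sigma^t$ step by step over the strips $P\times[k,k+1]$ (reparametrizing $t$), in each step first extending the family of exact immersions from $P\times D_k$ across the attached arc, and then using Mergelyan approximation on the admissible set together with the period-dominating spray of Lemma~\ref{lem:perioddominatingsprays} to correct the periods and pass to genuine exact holomorphic immersions on $P\times D_{k+1}$, all the while keeping $\sigma^t$ fixed over $Q$ and $C^1$-close to $\sigma$ over $\bar D$. Passing to the limit $k\to\infty$ and using the usual shrinking of the deformation near each scale gives a limit homotopy that is a continuous family of exact holomorphic immersions on all of $M$, with $\sigma^1\in\Iscr_*(P\times M,\C^{2n})$; conditions (1)--(4) are preserved throughout.

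The heart of each inductive step is the noncritical-versus-critical dichotomy. For a noncritical enlargement there is no new homology, so the only issue is to extend the family of immersions $\sigma_p$ from a neighborhood of $\bar D_k$ to a neighborhood of $\bar D_{k+1}$ while preserving exactness of $xdy$; since no period constraints are added, this is done by a small $C^1$-perturbation keeping the immersion property (an open condition) and then a parametric Mergelyan approximation on the Runge set $\bar D_k$, followed by a period correction via the spray to undo the (small) periods over the old homology basis. For a critical enlargement across the arc $\gamma$, the new ingredient is precisely Lemma~\ref{lem:CI}: parametrize $\gamma$ by $[0,1]$, pull $\xi=(f,g)=\sigma|_{P\times\gamma}$ back to a family in $\Iscr(P\times[0,1],\C^{2n})$, and let $\beta(p)$ be the value that the new period functional must attain in order for $xdy$ to be exact on $S_k$ after extension. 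Lemma~\ref{lem:CI} produces a homotopy of the family of immersed arcs, fixed over $Q$ and with fixed $1$-jets at the endpoints $s=0,1$ (so it glues smoothly with the unchanged family over $\bar D_k$), achieving the prescribed integral $\int_0^1 f^1_p\dot g^1_p\,ds=\beta(p)$; this gives a continuous family of exact immersions of class $\Ascr^r$ on the admissible set $S_k=\bar D_k\cup\gamma$. Then one applies parametric Mergelyan on $S_k$ (Remark~\ref{rem:perioddominatingsprays}) together with the period-dominating spray to move to holomorphic exact immersions on a neighborhood of $\bar D_{k+1}$.

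The main obstacle, as usual in parametric Oka theory, is to carry out the period correction \emph{parametrically and continuously in $p\in P$} while simultaneously keeping the homotopy fixed over the subparameter space $Q$ (where $\sigma$ is already exact) and over the domain $\bar D$ (where we need $C^1$-closeness). The mechanism is the implicit function theorem applied fibrewise to the composition of the period map with the period-dominating spray $(\tilde x,\tilde y)$ of Lemma~\ref{lem:perioddominatingsprays}: the surjectivity of the partial differential~\eqref{eq:derivative-period} at every $p\in P$, together with compactness of $P$, yields a continuous family of spray parameters $\zeta=\zeta(p)\in\C^N$, depending continuously on $p$ and vanishing for $p\in Q$ (by uniqueness in the IFT, since the period is already zero there and the core map is unchanged), that restores $\Pcal=0$; shrinking $\zeta$ and composing with a cutoff in $t$ keeps the correction within the allotted $\epsilon$-budget over $\bar D$. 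Continuity of the assembled homotopy on $P\times M\times[0,1]$ follows from the fact that on each compact piece $P\times\bar D_k\times[0,1]$ only finitely many steps act nontrivially, and the construction at scale $k$ perturbs $\sigma^t$ by at most $\epsilon/2^{k+1}$ in the $C^1$-norm over $\bar D_k$, so the partial homotopies converge uniformly with their $M$-derivatives on compacts, and a limit of holomorphic families is holomorphic. This simultaneously delivers (1)--(4).
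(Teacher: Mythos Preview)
Your overall architecture matches the paper's proof: Morse-type exhaustion, noncritical/critical dichotomy, Lemma~\ref{lem:CI} to adjust the new period along the attached arc, and the period-dominating spray of Lemma~\ref{lem:perioddominatingsprays} together with the implicit function theorem to restore exactness after approximation. The inductive bookkeeping in your last paragraph is also essentially the paper's (the paper builds the full homotopy $\sigma^{t,j}$ for $t\in[0,1]$ at each stage $j$ and lets $j\to\infty$, rather than slicing $[0,1]$ into time-strips, but this is cosmetic).

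There is, however, one genuine gap. In the noncritical step you write that extending the family of immersions from a neighborhood of $\bar D_k$ to a neighborhood of $\bar D_{k+1}$ ``is done by a small $C^1$-perturbation keeping the immersion property (an open condition) and then a parametric Mergelyan approximation on the Runge set $\bar D_k$.'' Being an immersion is open in the $C^1$-topology on maps with a \emph{fixed} domain, but Mergelyan approximation of $\sigma^{t,j}_p|_{\bar D_k}$ produces holomorphic maps on $\bar D_{k+1}$ with no control whatsoever on $\bar D_{k+1}\setminus D_k$; there is no reason the approximant is an immersion there. The same issue recurs when you invoke ``parametric Mergelyan on $S_k$'' in the critical step. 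The paper handles this (the ``Claim'' in the noncritical case) by passing to derivatives: set $\phi^t_p=d\tilde\sigma^t_p/\theta\colon U_j\to\C^{2n}_*$, embed this family in a spray dominating the \emph{integration} periods $\Qcal(\phi)=\bigl(\int_{C_i}\phi\,\theta\bigr)_i$, apply the parametric Oka principle with approximation for maps into the Oka manifold $\C^{2n}_*$ (fixed over $(P\times\{0\})\cup(Q\times[0,1])$) to extend to $U_{j+1}$, correct $\Qcal$ to zero via the implicit function theorem, and integrate back to recover a genuine family of holomorphic immersions on $U_{j+1}$. Only then does the $\Pcal$-dominating spray from Lemma~\ref{lem:perioddominatingsprays} enter, to restore $\Pcal=0$. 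Without this derivative/Oka/integrate manoeuvre your inductive step does not go through.

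A minor point: attaching an arc does not always raise $\mathrm{rank}\,H_1$ by one. When the endpoints of $\gamma$ lie in distinct components of $\bar D_k$ (Case~2 in the paper), no new cycle appears and no period adjustment via Lemma~\ref{lem:CI} is needed---but the extension machinery above is still required. Also note that the paper applies Lemma~\ref{lem:CI} with the enlarged parameter pair $Q'=(P\times\{0\})\cup(Q\times[0,1])\subset P'=P\times[0,1]$, since the whole homotopy $\sigma^{t,j}$ (not just its $t=1$ slice) is being carried through the induction.
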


If a continuous map $\varphi\colon X\to Y$ satisfies the parametric h-principle (without approximation),
then $\varphi$ is a weak homotopy equivalence. Hence, the first part of Theorem \ref{th:immersions}   
is an immediate corollary of Theorem \ref{th:parametric}.

\begin{remark}\label{rem:php-whe}
(a)  The proof of Theorem \ref{th:parametric} gives the analogous result for a compact bordered Riemann surface $M$; 
in this case, the proof is completed in finitely many steps.

(b)  The proof of Theorem \ref{th:parametric} also gives the parametric 
Oka principle with approximation for Legendrian immersions.
However, a minor difference in the proof is explained in the paragraph
following the proof of  Theorem \ref{th:parametric}. It has to do with the fact that the map
$\Lscr(M,\C^{2n+1}) \to \Iscr_*(M,\C^{2n}) \times \C$ (see \eqref{eq:homeo})
is a homeomorphism only when $M$ is connected.  Hence, when extending an exact 
holomorphic immersion $\sigma=(x,y)$ (the projection of a Legendrian immersion $(x,y,z)$)
across a smooth arc $E$ connecting a pair of disjoint domains in $M$, we must ensure that the integral of the
$1$-form $xdy$ on $E$ equals the difference of the values of the last component $z$ at the 
respective endpoints of the arc; in view of \eqref{eq:z-component}, this ensures the correct extension
of the $z$-component.
\end{remark}

\begin{proof}[Proof of Theorem \ref{th:parametric}]
Pick a smooth strongly subharmonic Morse exhaustion function $\rho\colon M\to \R$ and exhaust 
$M$ by sublevel sets 
\[
	D_j=\{u\in M\colon \rho(u)<c_j\}, \quad j\in \N,
\]
where $c_1<c_2<c_3<\ldots$ is an increasing sequence of regular values of $\rho$ chosen such that 
$\lim_{j\to\infty} c_j=\infty$. We may assume that each interval $[c_j,c_{j+1}]$ contains at most one critical 
value of the function $\rho$, and that $D_1$ coincides with the given domain $D$ in 
Theorem \ref{th:parametric}. Let $U_1=U\supset \bar D_1$ be the open neighborhood 
of $\bar D_1$ as in the theorem. 

To begin the induction, set $\epsilon_0=\epsilon$ and  
\[
	\sigma^{t,1} = \sigma|_{P\times U_1} \in  \Iscr_*(P\times U_1,\C^{2n}), \quad t\in [0,1].
\]
We shall inductively find a sequence of open sets $U_j\supset \bar D_j$ in $M$, homotopies 
\[
	\sigma^{t,j} \in  \Iscr(P\times U_j,\C^{2n}), \quad t\in [0,1],\ \  j\in\N
\]	
and  numbers $\epsilon_j>0$ satisfying the following conditions for $j=1,2,3,\ldots$:
\begin{itemize}
\item[$(a_j)$] $\sigma^{t,j}_{p} = \sigma_p|_{U_{j}}$ for every $(p,t)\in (P\times \{0\}) \cup (Q\times [0,1])$; 
\vspace{1mm}
\item[$(b_j)$]  $\sigma^{t,j}|_{P\times D_{1}} \in \Iscr_*(P\times D_{1},\C^{2n})$  for every $t\in  [0,1]$;
\vspace{1mm}
\item[$(c_j)$]  $\|\sigma^{t,j} - \sigma^{t,j-1}\|_{1, P\times \bar D_{j-1}} < \epsilon_j$ for every $t\in  [0,1]$;
\vspace{1mm}
\item[$(d_j)$]  $\sigma^{1,j}|_{P\times D_j}\in \Iscr_*(P\times D_{j},\C^{2n})$;
\vspace{1mm}
\item[$(e_j)$] $\epsilon_j<\epsilon_{j-1}/2$;
\vspace{1mm}
\item[$(f_j)$] If $\tilde \sigma^t \colon P\times \bar D_{j-1} \to\C^{2n}$ satisfies 
$\|\tilde \sigma^{t} - \sigma^{t,j-1}\|_{1, P\times \bar D_{j-1}} < 2\epsilon_j$ for every $t\in  [0,1]$,
then $\tilde \sigma^{t}(p,\cdotp)\colon \bar D_{j-1}  \to\C^{2n}$ is an immersion for every 
$p\in P$ and $t\in  [0,1]$.
\end{itemize}
Conditions $(a_1)$, $(b_1)$ and $(d_1)$ hold by the definition of $\sigma^{t,1}$, 
$(e_1)$ is  fulfilled by choosing $0<\epsilon_1<\epsilon_0/2$, while $(c_1)$ and $(f_1)$ are vacuous. 
Assume for a moment that sequences with these properties exist. 
Conditions $(c_j)$, $(e_j)$ and $(f_j)$ ensure that the sequence $(\sigma^{t,j})_{j\in\N}$ converges to a limit
\[
	\sigma^t = \lim_{j\to \infty} \sigma^{t,j} \colon P\times M\lra \C^{2n}, \quad t\in [0,1]
\]
such that $\sigma^t_p\colon M\to\C^{2n}$ is a holomorphic immersion for every $p\in P$ and $t\in[0,1]$
and (3) holds. Condition $(a_j)$ ensures that all homotopies $\sigma^{t,j}$ are fixed on the parameter 
set $(P\times \{0\}) \cup (Q\times [0,1])$, which gives (1). 
Condition $(b_j)$ shows that  $\sigma^t_p\colon D \to\C^{2n}$ is an exact holomorphic immersion 
for every $p\in P$ and $t\in [0,1]$, so (2) holds. Condition $(d_j)$ shows that $\sigma^1_p\colon M\to\C^{2n}$ 
is an exact holomorphic immersion for every $p\in P$, which gives (4). 
This shows that the theorem holds if we can construct such a sequence of homotopies.

We now explain the induction. Assume that the quantities satisfying the above conditions 
have been found up to an index $j\in \N$. Then, conditions $(e_{j+1})$ and $(f_{j+1})$
hold provided that the number $\epsilon_{j+1}>0$ is chosen small enough; fix such a number.
We shall now explain how to obtain $\sigma^{t,j+1}$ and $U_{j+1}$ satisfying 
conditions $(a_{j+1})$--$(d_{j+1})$.
We distinguish two topologically different cases:  (a) the noncritical case, and (b) the critical case. 

\vspace{1mm} 

{\em (a) The noncritical case:  $\rho$ has no critical values in $[c_j,c_{j+1}]$.}
In this case, $\bar D_j$ is a deformation retract of $\bar D_{j+1}$. 
(In the critical case considered below, we use the noncritical case 
also for certain noncritical pairs of sets $K\subset L$ defined by another
strongly subharmonic function.) 

Pick a Runge homology basis $\Bcal=\{\gamma_i\}_{i=1}^l$ for $H_1(D_j;\Z)$, that is, such that the union of supports $\bigcup_{i=1}^l |\gamma_i|$ is $\Oscr(D_j)$-convex. Let $\Pcal$ 
denote the associated period map \eqref{eq:periodmap}:
\[
	\Pcal(\sigma) =  \left(\int_{\gamma_i} xdy \right)_{i=1,\ldots,l} \in \C^l,\qquad  \sigma=(x,y) \in \Iscr(D_j,\C^{2n}).
\]
Note that the pair $(\Bcal,\Pcal)$ also applies to the domain $D_{j+1}$ since  
$\bar D_j$ is a deformation retract of $\bar D_{j+1}$. 
Let $\zeta=(\zeta_1,\ldots,\zeta_N)$ denote the coordinates on $\C^N$. Shrinking $U_j\supset \bar D_j$ if necessary,
Lemma \ref{lem:perioddominatingsprays}, applied with the parameter space $P'=P\times [0,1]$,
gives an integer $N\in\N$ and a spray
\[
	\tilde \sigma^t = (\tilde x^t,\tilde y^t) \colon P\times U_j   \times \C^N \to \C^{2n},
	\quad t\in [0,1],
\]
such that the map $\tilde \sigma^t_p = \tilde \sigma^t(p,\cdotp,\cdotp) \colon U_j \times \C^N \to\C^{2n}$
satisfies the following conditions:
\begin{itemize} 
\item[\rm (i)] $\tilde \sigma^t_p$ is holomorphic on $U_j   \times \C^N$ for every $(p,t)\in P\times [0,1]$;
\vspace{1mm}
\item[\rm (ii)] $\tilde \sigma^t_p(\cdotp,0)=\sigma^t_p(\cdotp,0)$ at $\zeta=0\in\C^N$  for every $(p,t)\in P\times [0,1]$; 
\vspace{1mm}
\item[\rm (iii)] the partial differential
\begin{equation}\label{eq:tildesigmat}
	\frac{\di}{\di\zeta}\bigg|_{\zeta=0} \Pcal(\tilde \sigma^t_p(\cdotp, \zeta)) \colon \C^N \lra \C^\ell
\end{equation}
is surjective for every $(p,t)\in P\times [0,1]$. 
\end{itemize}
Furthermore, in view of Mergelyan's theorem \cite{Mergelyan1951DAN},
the functions $g_j$ used in the construction of  $\tilde \sigma^t$ (see \eqref{eq:X-spray1}) 
can be chosen holomorphic on $M$. Since the spray $\tilde \sigma^t$ is linear in $\zeta\in\C^N$ and the core 
$\tilde\sigma^t_p(\cdotp,0)=\sigma^t_p$ is holomorphic on $M$ for all $(p,t)\in (P\times \{0\}) \cup (Q\times [0,1])$,
$\tilde \sigma^t_p$ is holomorphic on $M\times \C^N$ for all $(p,t)\in (P\times \{0\}) \cup (Q\times [0,1])$.
Pick an open relatively compact neighborhood $U_{j+1}\Subset M$ of $\bar D_{j+1}$ which deformation
retracts onto $\bar D_{j+1}$. Since the map $\tilde \sigma^t_p(\cdotp,0)=\sigma^t_p$ is an immersion 
on the respective domain for every $(p,t)\in P\times [0,1]$, we can shrink $U_j$ slightly around $\bar D_j$ and
choose a ball $B\subset \C^N$ around the origin such that 
\begin{itemize} 
\item[\rm (iv)]  $\tilde \sigma^t_p(\cdotp,\zeta)\colon U_j\to \C^{2n}$ 
is an immersion for every $(p,t)\in P\times [0,1]$ and $\zeta\in \bar B$, and 
\vspace{1mm}
\item[\rm (v)]  $\tilde \sigma^t_p(\cdotp,\zeta)\colon \overline U_{j+1} \to \C^{2n}$ is an
immersion for all $(p,t) \in (P\times \{0\}) \cup (Q\times [0,1])$ and $\zeta\in \bar B$. 
\end{itemize}

{\em Claim:}  $\tilde \sigma^t$ can be approximated as closely as desired in the $\Cscr^1$ norm on 
$\bar D_j \times \bar B$, and uniformly in the parameters $(p,t)\in P\times [0,1]$, by a homotopy
\[
	\tau^t  \colon P\times U_{j+1} \times B \to \C^{2n},\quad t\in [0,1],
\] 
satisfying conditions (i)--(v) above and also the following two conditions:
\begin{itemize}
\item $\tau^t(p,\cdotp,\zeta) \colon U_{j+1} \to \C^{2n}$ is a holomorphic immersion 
for every $(p,t)\in P\times [0,1]$ and $\zeta\in B$, and
\vspace{1mm}
\item $\tau^t(p,\cdotp,\cdotp) = \tilde \sigma^t(p,\cdotp,\cdotp)$ for all $(p,t)\in (P\times \{0\}) \cup (Q\times [0,1])$.
\end{itemize}

\begin{proof}[Proof of the claim]
Such $\tau^t$ can be found by following the noncritical case in 
\cite[proof of Theorem 5.3]{ForstnericLarusson2016} when the cone $A$ equals $\C^{2n}$. 
The only difference is that, in the present situation,
the maps $\tilde \sigma^t_p$ depend holomorphically on the additional complex parameter 
$\zeta\in B\subset\C^N$. We outline the main steps and refer to the cited source for the details.

Fix a nowhere vanishing holomorphic $1$-form $\theta$ on $M$. Let $d$ denote the exterior
differential on $M$. Consider the family of holomorphic maps 
\begin{equation}\label{eq:phitp}
	\tilde \phi^t_p(\cdotp,\zeta) = d\tilde \sigma^t_p(\cdotp,\zeta)/\theta \colon U_j\to \C^{2n}_*
\end{equation}
for $(p,t)\in P\times [0,1]$ and $\zeta\in \bar B$. Their ranges avoid the origin 
since the maps  $\tilde \sigma^t_p(\cdotp,\zeta)$ are immersions by condition (iv).
Furthermore, for each $(p,t) \in (P\times \{0\}) \cup (Q\times [0,1])$ and $\zeta\in \bar B$, 
the map $\tilde \phi^t_p(\cdotp,\zeta)\colon  \overline U_{j+1}\to \C^{2n}_*$ is holomorphic
on $\overline U_{j+1}$ in view of condition (v).

Let $\Qcal$ denote the period map defined for any map $\phi\colon D_j\to \C^{2n}$ by 
\[
	\Qcal(\phi)=\left(\int_{C_i} \phi\, \theta\right)_{i=1,\ldots,l} \in (\C^{2n})^l.
\]
Here, $\{C_i\}_{i=1}^l$ is a Runge homology basis of $H_1(D_j;\Z)$. We embed the family of maps
\eqref{eq:phitp} as the core of a spray $\phi^t_p(\cdotp,\zeta,w)$ (that is,
$\phi^t_p(\cdotp,\zeta,0)=\tilde \phi^t_p(\cdotp,\zeta)$), depending holomorphically
on another set of parameters $w\in \C^{N'}$ for some integer $N'\in\N$, 
such that the partial differential
\[
	\frac{\di}{\di w}\bigg|_{w=0} \Qcal(\phi^t_p(\cdotp,\zeta,w)) : \C^{N'}\to (\C^{2n})^l
\]
is surjective for every $(p,t)\in P\times [0,1]$ and $\zeta\in B$. 
Such $\Qcal$-period dominating sprays were constructed in \cite[Lemma 5.1]{AlarconForstneric2014IM}; 
see also \cite[Lemma 3.6]{AlarconForstnericCrelle} for the parametric case. 

Fix a ball $B'\subset \C^{N'}$ centered at the origin.
Since $\C^{2n}_*$ is an Oka manifold, the parametric Oka principle with approximation
\cite[Theorem 5.4.4]{Forstneric2011} shows that we can approximate the family 
of holomorphic maps $\phi^t_p\colon U_j\times \bar B\times \bar B' \to \C^{2n}_*$
in the $\Cscr^r$ topology on $\bar D_j \times B\times B'$ by a continuous family of holomorphic maps
\[
	\psi^t_p \colon U_{j+1}\times B\times B'  \to \C^{2n}_*,\quad (p,t)\in P\times [0,1],
\]
such that  $\psi^t_p(\cdotp,\zeta,w)=\phi^t_p(\cdotp,\zeta,w)$ for all $(p,t)\in (P\times \{0\}) \cup (Q\times [0,1])$
and $(\zeta,w) \in B\times B'$.
Assuming that the approximation is close enough, the implicit function
theorem gives a continuous function $w=w(p,t,\zeta)$ on $P\times [0,1]\times \bar B$
with values in $\C^{N'}$ and close to $0$, such that $w$ is holomorphic in $\zeta\in B$, 
vanishes for $(p,t)\in (P\times \{0\}) \cup (Q\times [0,1])$ and $\zeta\in B$, and we have the period vanishing conditions
\begin{equation}\label{eq:Qdom}
	\Qcal\bigl(\psi^t_p(\cdotp,\zeta,w(p,t,\zeta))\bigr) =0 \quad \text{for all}\ (p,t,\zeta)\in P\times[0,1]\times B.   
\end{equation}
Pick an initial point $u_0\in D_j$. It is straightforward to verify that  the family of maps
\[
	\tau^t(p,u,\zeta) = \tilde\sigma^t(p,u_0,\zeta) + \int_{u_0}^u  \psi^t_p(\cdotp,\zeta,w(p,t,\zeta))\, \theta,
	\quad u\in U_{j+1},
\]
then satisfies the claim. (Since $\bar D_j$ is a deformation retract of $U_{j+1}$, the integral is independent 
of the choice of the path in $U_{j+1}$ due to the period vanishing condition \eqref{eq:Qdom}.)
If $D_j$ is disconnected, the same  argument applies on each connected component.
\end{proof}

We continue with the proof of the theorem. Assuming as we may that the approximation 
of $\tilde \sigma^t$ by $\tau^t$ is close enough, the period domination property \eqref{eq:tildesigmat}
of the spray $\tilde \sigma^t$ and the implicit function theorem give a continuous map
\[
	\zeta \colon P\times [0,1] \to B \subset\C^N,
\] 
with values close to $0$ (depending on how close $\tau^t$ is to $\tilde\sigma^t$), such that
\begin{equation}\label{eq:zetavanishes}
	\text{$\zeta$ vanishes on the set $(p,t)\in (P\times \{0\})\cup (Q\times [0,1])$,}
\end{equation}
and the family of holomorphic immersions
\[
	\sigma^{t,j+1}_p = \tau^t(p,\cdotp,\zeta(p,t)) \colon  U_{j+1} \to \C^{2n}
\] 
satisfies the period conditions
\begin{equation}\label{eq:Pjplus1}
	\Pcal(\sigma^{t,j+1}_p)=\Pcal(\sigma^{t,j}_p), \quad (p,t)\in P \times [0,1].
\end{equation}
In view of \eqref{eq:zetavanishes}, $\sigma^{t,j+1}$ satisfies condition $(a_{j+1})$.
Writing $\sigma^{t,j+1}_p=(x^{t,j+1}_p,y^{t,j+1}_p)$, it follows from \eqref{eq:Pjplus1} that for every
loop $C\subset D_{1}$ and for all $(p,t)\in P \times [0,1]$, we have
\[
	\int_C  x^{t,j+1}_p dy^{t,j+1}_p = \int_C  x^{t,j}_p dy^{t,j}_p = 0.
\]
This shows that $\sigma^{t,j+1}$ satisfies condition $(b_{j+1})$. The same argument for
loops $C\subset D_{j+1}$ and $t=1$ shows that $(d_{j+1})$ holds. (Note that it suffices
to verify the period vanishing condition for loops in $\bar D_{j}$, which is a deformation retract of $\bar D_{j+1}$.)
Finally, condition $(c_{j+1})$ holds if the approximations are close enough.

This completes the inductive step in the noncritical case.

\vspace{1mm}

{\em (b) The critical case: $\rho$ has a (unique, Morse) critical point in $D_{j+1}\setminus \bar D_j$.} 
In this case, $\bar D_{j+1}$  deformation retracts onto a compact set 
of the form $S=\bar D_j \cup E$, where $E$ is a smooth embedded arc contained in 
$D_{j+1}\setminus \bar D_j$, except for its endpoints which lie in $bD_j$. We may assume that $E$ 
intersects $bD_j$ transversely at both endpoints. Hence, $S$ is an {\em admissible Runge set}
in $D_{j+1}$ (see Remark \ref{rem:perioddominatingsprays} and 
\cite[Definition 5.1]{AlarconForstnericLopez2016MZ}). 

There are two topologically different cases to consider.

{\em Case 1:}  the arc $E$ closes inside the domain $D_j$ to a Jordan curve $C$ such that 
$E=C\setminus D_j$. This happens when the endpoints of $E$ belong to the same 
connected component of $\bar D_j$. In this case, $H_1(D_{j+1};\Z)=H_1(D_{j};\Z)\oplus \Z$
where $C$ represents the additional generator.

\vspace{1mm}

{\em Case 2:} the endpoints of the arc $E$ belong to different connected components of $\bar D_j$.
In this case, no new element of the homology basis appears.

\vspace{1mm}

We begin with case 1. Let $C$ be a smooth Jordan curve in $M$ such that $E=C\setminus D_j$.
Recall that $\sigma=(x,y) \in \Iscr(P\times M,\C^{2n})$ is the given map in the theorem, and 
$\sigma^{t,j}=(x^{t,j},y^{t,j}) \in \Iscr(P\times U_j,\C^{2n})$ is a homotopy from the $j$-th step. 
After shrinking the neighborhood $U_j$ around $\bar D_j$ if necessary, we can extend 
$\sigma^{t,j}$ from $P\times U_j$ to a homotopy 
\[
	\sigma^{t,j} = (x^{t,j},y^{t,j}) \colon P\times (U_j\cup E) \to \C^{2n}, \quad t\in [0,1]
\]
such that $\sigma^{t,j}_p|_E \colon E \to \C^{2n}$ is a $\Cscr^1$ immersion for every $(p,t)\in P\times [0,1]$ and 
\[
	\sigma^{t,j}_p|_E = \sigma_p|_E\quad \text{for all}\ \ (p,t)\in (P\times \{0\})\cup (Q\times [0,1]).
\]
In particular, condition (a) on $\sigma$ (in the theorem) implies
\begin{equation}\label{eq:OKonQ}
	\int_C x^{t,j}_p dy^{t,j}_p =0 \quad \text{for all}\ \ (p,t)\in Q\times [0,1].
\end{equation}

Our goal is to deform the homotopy $\sigma^{t,j}$ (only) on the relative interior of $E$, keeping it fixed 
for the parameter values $(p,t)\in (P\times \{0\})\cup (Q\times [0,1])$, to a new homotopy
(still denoted $\sigma^{t,j}=(x^{t,j},y^{t,j})$) such that at $t=1$ we have
\begin{equation}\label{eq:intCvanishes}
	\int_C x^{1,j}_p dy^{1,j}_p =0 \quad \text{for all}\ \ p\in P.
\end{equation}
This can be done by using Lemma \ref{lem:CI} as follows.
Choose a smooth regular parametrization $\lambda\colon [0,1]\to E$ with $\lambda(0), \lambda(1) \in bD_j$.
Consider the family of immersed arcs $\xi^t_p=(f^t_p, g^{t}_p) \colon [0,1]\to\C^{2n}$ 
for $(p,t)\in P\times [0,1]$ defined by
\begin{equation}\label{eq:xieta}
	\xi^t_p(s)=\sigma^{t,j}_p(\lambda(s)) = \bigl(f^{t}_p(s), g^{t}_p(s)\bigr), \quad s\in [0,1].
\end{equation}
It follows that
\[
	\int_E x^{t,j}_p dy^{t,j}_p = \int_0^1 f^{t}_p(s) \dot g^{t}_p(s)ds.
\]
Define the function $\beta\colon P\to\C$ by
\begin{equation}\label{eq:beta}
	\beta(p) = - \int_{C\setminus E}  x^{1,j}_p dy^{1,j}_p, \quad p\in P.
\end{equation}
%
%
We now apply Lemma \ref{lem:CI} to the family $(\xi^t_p)_{p,t}$, the pair of parameter spaces 
\[
	(p,t)\in P'=P\times [0,1],\quad Q'=(P\times \{0\})\cup (Q\times [0,1]),
\]
the function $\beta$ given by \eqref{eq:beta}, taking into account condition \eqref{eq:OKonQ}.
This provides a deformation of $(\xi^t_p)_{(p,t)\in P'}$ through a family of immersions $[0,1]\to \C^{2n}$
of class $\Cscr^1$ (the parameter of the homotopy $\tau\in[0,1]$ shall be omitted)  
such that the homotopy is fixed for $(p,t)\in Q'$, it is fixed near the endpoints of $[0,1]$
for all $(p,t)\in P'$, and the new family obtained at $\tau=1$ satisfies the condition 
\[
	\int_0^1 f^{1}_p(s) \dot g^{1}_p(s)ds =\beta(p), \quad p\in P.
\]
By using the parametrization $\lambda\colon [0,1]\to E$ as in \eqref{eq:xieta}, this provides a homotopy
of the family of immersions $\sigma^{t,j}_p=(x^{t,j}_p,y^{t,j}_p) \colon U_j\cup E \to \C^{2n}$ which is 
fixed on $U_j$ such that the new family satisfies the condition
\begin{equation}\label{eq:integralFG}
	\int_E  x^{1,j}_p dy^{1,j}_p = \int_0^1 f^{1}_p(s) \dot g^{1}_p(s)ds =\beta(p),  \quad p\in P.
\end{equation}
Now, \eqref{eq:intCvanishes} follows immediately from \eqref{eq:beta} and \eqref{eq:integralFG}.

Denote by $\Pcal'$ the period map \eqref{eq:periodmap} with respect to
the homology basis $\Bcal$ of $D_j$ and the additional loop $C$. It follows from the above that
$\Pcal' (\sigma^{1,j}_p) = 0$ for all $p\in P$.

The inductive step can now be completed as in the noncritical case; here is an outline.
By Lemma \ref{lem:perioddominatingsprays} we can embed the family of immersions 
$\sigma^{t,j}_p\colon U_j\cup E\to \C^{2n}$ $((p,t)\in P\times [0,1])$
as the core of a period dominating spray depending 
on an additional set of variables $\zeta\in\C^N$. (The set $U_j$ may shrink around $\bar D_j$.)
Since $\bar D_j\cup E$ is an admissible set in $D_{j+1}$ and a deformation
retract of $\bar D_{j+1}$, we can apply the Mergelyan theorem for 
holomorphic immersions to $\C^{2n}$ to approximate this spray, 
as closely as desired in the $\Cscr^1$-topology on $\bar D_j\cup E$, by a spray 
consisting of holomorphic immersions from a neighborhood $U_{j+1}\subset M$ 
of $\bar D_{j+1}$ into $\C^{2n}$. As in the proof of the noncritical case,
replacing the parameter $\zeta$ by a suitably chosen function $\zeta(p,t)$ with values in $\C^N$ 
and close to $0$ gives a homotopy $\sigma^{t,j+1} \in \Iscr(P\times U_{j+1},\C^{2n})$ 
satisfying conditions $(a_{j+1})$--$(d_{j+1})$.

This completes the induction step in case 1 of the critical case (b).

In case 2, the arc $E$ connects two distinct connected components of $\bar D_j$.
We follow the construction in case 1 to obtain an extension
of the family $\sigma^t_p\colon U_j\to\C^{2n}$ across $E$
to a family of immersions $U_j \cup E\to\C^{2n}$; however, there is no need to 
adjust the value of the integral \eqref{eq:integralFG}.
On the other hand, when approximating this family of maps
on $\bar D_j \cup E$ by maps on $U_{j+1} \supset \bar D_{j+1}$,  we still need to use a 
dominating spray as in case 1 in order to keep the period vanishing condition on curves in the 
homology basis $\Bcal$ for $D_j$.
\end{proof}

Returning to Remark \ref{rem:php-whe}, we note that a nontrivial difference appears in the final paragraph 
of the above proof when proving the parametric Oka property for the space of Legendrian immersions.  
Recall that the map $\Lscr(M,\C^{2n+1}) \to \Iscr_*(M,\C^{2n}) \times \C$, given by \eqref{eq:homeo}, 
is a homeomorphism only if $M$ is connected.  When the arc $E$ connects two distinct connected components 
of the set $\bar D_j$, we must ensure the correct value of the integral \eqref{eq:integralFG} in order to match 
the $z$-component of the Legendrian map (which is already defined on a neighborhood of $\bar D_j$) near 
the endpoints of $E$.  This can be achieved just like in case 1.


\section{Strong homotopy equivalence for surfaces of finite topological type} \label{sec:strong}

In this section, we complete the proof of Theorem \ref{th:immersions} by showing that if $M$ is a connected 
open Riemann surface of finite topological type, then the inclusion $\Iscr_*(M,\C^{2n}) \hookrightarrow \Iscr(M,\C^{2n})$, 
already known to be a weak homotopy equivalence, is in fact a homotopy equivalence.  It is even the inclusion of a strong 
deformation retract.  We closely follow the proof of a similar result in \cite[Section 6]{ForstnericLarusson2016}, 
which in turn is based on \cite{Larusson2015PAMS}.

Our approach to showing that the weak homotopy equivalence $\Iscr_*(M,\C^{2n}) \hookrightarrow \Iscr(M,\C^{2n})$ is the 
inclusion of a strong deformation retract is to prove that the metrizable spaces $\Iscr_*(M,\C^{2n})$ and $\Iscr(M,\C^{2n})$
are absolute neighborhood retracts (ANR).  Namely, an ANR has the homotopy type of a CW complex, and a weak homotopy
equivalence between CW complexes is a homotopy equivalence.  Hence, if $j:A\hookrightarrow B$ is the inclusion of a 
closed subspace in a metrizable space $B$, both spaces are ANRs, and $j$ is a weak homotopy equivalence, then $j$ is 
a homotopy equivalence.  Moreover, $j$ is a cofibration (in the sense of Hurewicz), so $j$ is the inclusion of a strong 
deformation retract.  For more information on what is involved, we refer to \cite[Section 6]{ForstnericLarusson2016}.  

The space $\Iscr(M,\C^{2n})$ is an open subset of the Fr\'echet space of all holomorphic maps $M\to\C^{2n}$, so it is an ANR.  

To show that the space $\Iscr_*(M,\C^{2n})$ is an ANR, we verify that it satisfies the so-called Dugundji-Lefschetz property.  
Once we have prepared two ingredients for the proof, it proceeds exactly as the proof of 
\cite[Theorem 6.1]{ForstnericLarusson2016}.

First, we note the homeomorphism
\[ \Iscr(M,\C^{2n}) \to \Oscr_0(M,\C_*^{2n})\times \C, \qquad \sigma\mapsto(d\sigma/\theta, \sigma(p)), \]
where $\Oscr_0(M,\C_*^{2n})$ is the space of holomorphic maps $M\to\C_*^{2n}$ with vanishing 
periods, $\theta$ is a nowhere vanishing holomorphic $1$-form  on $M$,
and $p\in M$ is a chosen base point.  We put together the parametric 
Oka principles with approximation for the inclusion $\Iscr_*(M,\C^{2n}) \hookrightarrow \Iscr(M,\C^{2n})$ 
(Theorem \ref{th:parametric}), for the inclusion $\Oscr_0(M,\C_*^{2n})\hookrightarrow \Oscr(M,\C_*^{2n})$ 
\cite[Theorem 5.3]{ForstnericLarusson2016}, and for the inclusion $\Oscr(M,\C_*^{2n})\hookrightarrow \Cscr(M,\C_*^{2n})$, 
which comes from $\C_*^{2n}$ being an Oka manifold.  This yields the first ingredient: the parametric Oka principle with approximation for the inclusion $\Iscr_*(M,\C^{2n}) \hookrightarrow\Cscr(M,\C_*^{2n})\times\C$.

The second ingredient is the following lemma, which is analogous to \cite[Lemma 6.4]{ForstnericLarusson2016}.  
The proof that $\Iscr_*(M,\C^{2n})$ is an ANR is then so similar to the proof of 
\cite[Theorem 6.1]{ForstnericLarusson2016} that we omit further details.

%
%
\begin{lemma}  \label{lem:contractible}
Let $M$ be an open Riemann surface, let $r\geq 1$ be an integer, and let  $\rho:M\to[0,\infty)$ be a smooth exhaustion function. 
Let $L_0\supset L_1\supset \cdots \supset K$ be compact smoothly bounded domains in $M$ of the form 
$\rho^{-1}([0,c])$, such that $K$ contains all the critical points of $\rho$.  Let $\sigma_0 \in\Iscr_{*}(M,\C^{2n})$ and let 
$W$ be a neighborhood of $\sigma_0|_K$ in $\Iscr_{*}^r(M,\C^{2n})$.  Then there are contractible neighborhoods $C_m$ of 
$\sigma_0|_{L_m}$ in $\Iscr_{*}^r(L_m,\C^{2n})$ such that  $C_m|_{L_{m+1}}\subset C_{m+1}$ and $C_m|_K\subset W$ 
for all $m\geq 0$. 
\end{lemma}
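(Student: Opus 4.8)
The plan is to follow the proof of \cite[Lemma 6.4]{ForstnericLarusson2016} closely, the essential new input being Lemma \ref{lem:perioddominatingsprays} together with the observation that the period map \eqref{eq:periodmap} only sees the restriction of a map to $K$. First I would record the reductions. Since $K$ contains every critical point of $\rho$ and $L_0\supset L_1\supset\cdots\supset K$ are sublevel sets of $\rho$, there is no critical value of $\rho$ between consecutive members of the sequence, so each of $L_{m+1}$ and $K$ is a strong deformation retract of $L_m$; in particular $H_1(L_m;\Z)\cong H_1(K;\Z)=\Z^\ell$ for all $m$, with $\ell<\infty$. Fix once and for all a Runge homology basis $C_1,\dots,C_\ell\subset\mathring K$ and let $\Pcal$ be the associated period map \eqref{eq:periodmap}, given by the \emph{same} integral formula on each of the complex Banach spaces $\Ascr^r(L_m)^{2n}$ and on $\Ascr^r(K)^{2n}$; note that $\Pcal(\mu)$, hence also $D\Pcal(\sigma_0)$, depends only on $\mu|_K$. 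Applying Lemma \ref{lem:perioddominatingsprays} to the constant family $\sigma_0|_{L_0}$ (nonconstant on each component, being an immersion) over a one-point parameter space gives an integer $N$ and a period-dominating holomorphic spray $\tilde\sigma\colon L_0\times\C^N\to\C^{2n}$ with core $\tilde\sigma(\cdot,0)=\sigma_0|_{L_0}$; because the periods are supported in $\mathring K$, the linear surjection $\di_\zeta\tilde\sigma(\cdot,0)$ followed by $\Pcal$, namely $\C^N\to\C^\ell$, and a fixed right inverse $S$ for it, are unchanged when $\tilde\sigma$ is restricted to any $L_m$ or to $K$.

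Next I would build mutually compatible charts. Set $P_m:=D\Pcal(\sigma_0|_{L_m})$, let $T_m:=\bigl(\di_\zeta\tilde\sigma(\cdot,0)\circ S\bigr)\big|_{L_m}$ be the resulting continuous linear right inverse of $P_m$, and let $\pi_m:=\mathrm{id}-T_mP_m$ be the projection of $\Ascr^r(L_m)^{2n}$ onto $E_m:=\ker P_m=T_{\sigma_0}\Iscr^r_*(L_m,\C^{2n})$ along $T_m(\C^\ell)$. Because $T_m$ is the restriction of the single spray and $P_m(\mu)$ depends only on $\mu|_K$, all three operators commute with the restriction maps $L_m\to L_{m+1}$ and $L_m\to K$. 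By the implicit function theorem — this is the argument proving Theorem \ref{th:Banach} — the map $\psi_m\colon\sigma\mapsto\pi_m(\sigma-\sigma_0)$ is a chart of $\Iscr^r_*(L_m,\C^{2n})$ at $\sigma_0|_{L_m}$, carrying a neighborhood of $\sigma_0|_{L_m}$ homeomorphically onto a neighborhood of $0$ in $E_m$, and it satisfies $\psi_{m+1}(\sigma|_{L_{m+1}})=\psi_m(\sigma)|_{L_{m+1}}$ and $\psi_K(\sigma|_K)=\psi_m(\sigma)|_K$. The only nonlinear term of $\Pcal$ is a homogeneous quadratic whose norm on $\Ascr^r(L_m)^{2n}$ is bounded by a constant independent of $m$ (the integrals run over curves in the fixed set $\mathring K\subset L_m$), and $\|T_m\|$, $\|\pi_m\|$ are also bounded uniformly in $m$; the quantitative implicit function theorem therefore yields a single radius $r>0$ such that $\psi_m^{-1}$ is defined on the ball $B_r\subset E_m$ of radius $r$ for every $m$ and for $K$. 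After shrinking $r$, I may also assume $\psi_K^{-1}(B_r)\subset W$.

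Finally I would set $C_m:=\psi_m^{-1}(B_r)$. Each $C_m$ is a neighborhood of $\sigma_0|_{L_m}$ in $\Iscr^r_*(L_m,\C^{2n})$, and it is contractible via the straight-line contraction $(\tau,s)\mapsto\psi_m^{-1}((1-s)\psi_m(\tau))$, which stays inside $C_m$ since $B_r$ is convex and contains $0$. For $\tau\in C_m$ one has $\|\psi_{m+1}(\tau|_{L_{m+1}})\|=\|\psi_m(\tau)|_{L_{m+1}}\|\le\|\psi_m(\tau)\|<r$, the inequality because restriction does not increase the $\Cscr^r$-norm on the respective domain, so $\tau|_{L_{m+1}}\in C_{m+1}$; likewise $\|\psi_K(\tau|_K)\|<r$, so $\tau|_K\in\psi_K^{-1}(B_r)\subset W$. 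Hence $C_m|_{L_{m+1}}\subset C_{m+1}$ and $C_m|_K\subset W$ for all $m$, as required.

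The main obstacle is precisely the compatibility across the \emph{whole} tower $L_0\supset L_1\supset\cdots\supset K$: forcing all the charts $\psi_m$ to commute with restriction at once, and securing one radius $r$ valid for every $m$. Both are engineered by using a single period-dominating spray for all the $L_m$ and by exploiting that $\Pcal$ depends only on the restriction to the fixed set $K$; once that is in place, the remaining verifications are routine uses of the implicit function theorem and of the fact that restriction is norm-nonincreasing.
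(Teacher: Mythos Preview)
Your proposal is correct and follows essentially the same route as the paper: both proofs exploit that the period map $\Pcal$ depends only on the restriction to the fixed support $|\Bcal|\subset K$, and use a single set of ``correction directions'' (your spray, the paper's $h_1,\dots,h_l\in\Ascr^r(M_0)^{2n}$) defined on the largest domain and then restricted, so that the implicit-function-theorem charts commute with the restriction maps $L_m\to L_{m+1}$ and $L_m\to K$.

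The only noteworthy difference is tactical. You apply the implicit function theorem on each $\Ascr^r(L_m)^{2n}$ separately and then invoke a quantitative version to get a single radius $r$ valid for all $m$, arguing that the norm of the quadratic part of $\Pcal$ and the norms $\|T_m\|$, $\|\pi_m\|$ are bounded uniformly. The paper instead applies the implicit function theorem \emph{once} on the smallest space $\Cscr^r(|\Bcal|,\C^{2n})$, obtaining correction coefficients $c_j(\sigma|_{|\Bcal|})$ that depend only on the restriction to $|\Bcal|$; the graph map $\psi_L(\sigma)=\sigma_0|_L+\sigma+\sum_j c_j(\sigma|_{|\Bcal|})\,h_j|_L$ is then automatically defined for every $L$ with $|\Bcal|\subset L\subset M_0$ whenever $\|\sigma\|_{r,|\Bcal|}<\epsilon_1$, so the uniform radius comes for free without any quantitative bookkeeping. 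Your argument is perfectly valid, but the paper's device is a bit slicker and avoids the appeal to uniform IFT estimates.
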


\begin{proof}
Since $K$ contains all the critical points of $\rho$, there is a homology basis
$\Bcal=\{\gamma_i\}_{i=1,\ldots,l}$ of $H_1(M;\Z)$ whose support $|\Bcal| = \bigcup_{j=1}^l |\gamma_j|$ 
is contained in $K$ and is Runge in $M$. Let $\Pcal\colon \Oscr(M,\C^{2n}) \to \C^l$ denote the associated period map \eqref{eq:periodmap}:
\[
	\Pcal(\sigma)= \left(\int_{C_j} x\, dy\right)_{j=1,\ldots,l},  \qquad \sigma=(x,y) \in \Oscr(M,\C^{2n}). 
\]

Fix a map $\sigma_0 \in \Iscr_{*}(M,\C^{2n})$. 
Let $M_0$ be a compact smoothly bounded domain in $M$ (say a sublevel set of $\rho$) 
with the same topology as $M$ and containing $L_0$.
Note that $\Iscr^r(M_0,\C^{2n})$ is an open subset of the complex Banach space $\Ascr^r(M_0,\C^{2n})$.
Pick $\epsilon_0>0$ such that the $\epsilon_0$-ball around $\sigma_0$ in $\Ascr^r(M_0,\C^{2n})$
is contained in $\Iscr^r(M_0,\C^{2n})$.

By Lemma \ref{lem:perioddominatingsprays}, the differential of the period map
$\Pcal:\Ascr^r(M_0,\C^{2n}) \to \C^l$ at $\sigma_0$ is surjective. 
Let us denote it by 
\[	D= d_{\sigma_0}\Pcal : \Ascr^r(M_0,\C^{2n})\lra \C^l. \]
Its kernel
\begin{equation}\label{eq:Lambda0}
	\Lambda_0= \ker D = \{\sigma \in \Ascr^r(M_0,\C^{2n}) : D(\sigma)=0 \}
\end{equation}
is a closed complex subspace of codimension $l$ in $\Ascr^r(M_0,\C^{2n})$;
it is precisely the tangent space to the submanifold $\Iscr_{*}^r(M,\C^{2n})$ at the point $\sigma_0$.
Pick $h_1,\ldots,h_{l} \in \Ascr^r(M_0,\C^{2n})$ such that the vectors $D(h_1),\ldots,D(h_l)\in \C^l$ span $\C^l$; then 
\[
	\Ascr^r(M_0,\C^{2n})=\Lambda_0\oplus \span_\C \{h_1,\ldots,h_{l}\}.
\]

Note that the period map $\Pcal(\sigma)$ is defined whenever the domain $L$ of $\sigma$ contains the support
$|\Bcal|$ of the homology basis. Hence, the map $D=d_{\sigma_0}\Pcal$ is well defined on 
$\Cscr^r(L,\C^{2n})$ whenever $|\Bcal| \subset L \subset M_0$. Taking $L=|\Bcal|$, it follows that the complex 
Banach space  $\Cscr^r(|\Bcal|,\C^{2n})$ decomposes as a direct sum of closed complex Banach subspaces 
\begin{equation}\label{eq:split}
	\Cscr^r(|\Bcal|,\C^{2n}) = \ker D|_{\Cscr^r(|\Bcal|,\C^{2n})} \oplus 
	\span_\C \{h_1|_{|\Bcal|},\ldots,h_{l}|_{|\Bcal|}\} = \Lambda \oplus H. 
\end{equation}
By the implicit function theorem for Banach spaces, there are a number $\epsilon_1\in (0,\epsilon_0)$ 
and smooth bounded complex functions $c_1,\ldots,c_l$ on the set 
$\Lambda_{\epsilon_1}=\{\sigma\in \Lambda : \|\sigma\|_{r,|\Bcal|} < \epsilon_1\}$, vanishing at the origin $0\in \Lambda$,
such that for every $\sigma \in \Lambda_{\epsilon_1}$ the map  
\begin{equation}\label{eq:tildeg}
	\tilde \sigma = \sigma_0|_{|\Bcal|} + \sigma + \sum_{j=1}^{l} c_j(\sigma) h_j|_{|\Bcal|} \in \Cscr^r(|\Bcal|, \C^{2n}) 
\end{equation}
satisfies the period vanishing equation $\Pcal(\tilde \sigma)=0$.
Morever, \eqref{eq:tildeg} gives a local representation of the set 
$\{\tilde \sigma \in  \Cscr^r(|\Bcal|, \C^{2n}): \Pcal(\tilde \sigma)=0\}$ 
in a neighborhood of $\sigma_0|_{|\Bcal|}$ as a graph over the affine linear 
subspace $\sigma_0|_{|\Bcal|}+\Lambda \subset \Cscr^r(|\Bcal|, \C^{2n})$.

If $L$ is any smoothly bounded compact set
with $|\Bcal|\subset L \subset M_0$ and $\sigma \in \Ascr^r(L,\C^{2n})$ satisfies $D\sigma=0$ and $\|\sigma\|_{r,L}<\epsilon_1$, then \eqref{eq:tildeg} yields a map 
\[
	\psi_L(\sigma) =  \sigma_0|_L + \sigma + \sum_{j=1}^{l} c_j(\sigma|_{|\Bcal|}) h_j|_{L}  \in \Ascr^r(L,\C^{2n})
\]
such that $\Pcal(\psi_L(\sigma))=0$. Note that $\psi_L(0) =  \sigma_0$. Hence, $\psi_L(\sigma) \in \Iscr^r_*(L,\C^{2n})$
provided that $\|\psi_L(\sigma)-\sigma_0\|_{r,L}< \epsilon_0$; the latter condition is satisfied if $\epsilon_1>0$ is small enough. 
As before, this gives a local representation of the set 
$\{\tilde \sigma \in  \Ascr^r(L, \C^{2n}) : \Pcal(\tilde \sigma)=0\}$ 
in a neighborhood of $\sigma_0|_L$ as a graph over the affine linear 
subspace $\sigma_0|_{L}+\Lambda_0|_L \subset \Ascr^r(L, \C^{2n})$.
Here, $\Lambda_0=\ker D \subset \Ascr^r(M_0,\C^{2n})$ (see \eqref{eq:Lambda0}).

Note that for any compacts $L$ and $L'$ with $|\Bcal| \subset L\subset L'\subset M_0$, we have
\begin{equation}\label{eq:restriction}
	\psi_{L}(\sigma|_L) = \psi_{L'}(\sigma)\big|_L
\end{equation}
for every $\sigma\in \Ascr^r(L',\C^{2n})$ such that $D(\sigma)=0$ and $\|\sigma\|_{r,|\Bcal|} < \epsilon_1$.

Since the functions $c_j$ are bounded on a neighborhood of the origin in $\Lambda$ (see \eqref{eq:split}), 
there is a number $\epsilon\in (0,\epsilon_1)$ such that the set
\[
	C_0 = \bigl\{\psi_{M_0}(\sigma) : \sigma \in \Lambda_0,\ \|\sigma\|_{r, M_0}< \epsilon\bigr\} \subset \Iscr^r_*(M_0,\C^{2n})
\]
is a neighborhood of $\sigma_0|_{M_0}$ in $\Iscr^r_*(M_0,\C^{2n})$. Furthermore, being a smooth graph 
over the ball $\{\sigma \in \Lambda_0 : \|\sigma\|_{r, M_0}<\epsilon\}$ in the Banach space $\Lambda_0$, $C_0$ is contractible. 
Similarly, for every $m\in \N$, the set
\[
	C_m = \bigl\{\psi_{L_m}(\sigma)  : \sigma \in \Ascr^r(L_m,\C^{2n}),\ D(\sigma)=0,\ \|\sigma\|_{r, L_m}< \epsilon \bigr\}   
	\subset \Iscr^r_*(L_m,\C^{2n})
\]
is a contractible neighborhood of $\sigma_0|_{L_m}$ in $\Iscr^r(L_m,\C^{2n})$. 

Taking into account that for any $\sigma\in \Ascr^r(L_m,\C^{2n})$, we have
$\|\sigma\|_{r,L_{m+1}} \le \|\sigma\|_{r,L_{m}}$ by the maximum principle, 
the formula \eqref{eq:restriction} shows that the restriction map associated to 
the inclusion $L_m\supset L_{m+1}$ maps $C_m$ into $C_{m+1}$ for every $m\geq 0$. 
By choosing $\epsilon>0$ small enough, we can also ensure that the restriction map associated
to $L_m\supset K$ maps $C_m$ into a given neighborhood $W$ of $\sigma_0|_{K}$ in $\Iscr_{*}^r(K,\C^{2n})$.
\end{proof}


\subsection*{Acknowledgements}
F.\ Forstneri\v c is supported in part  by research program P1-0291 and Grant J1-7256 from ARRS, 
Republic of Slovenia.  F.~L\'arusson is supported in part by Australian Research Council Grant DP150103442.  
The work on this paper was done at the Centre for Advanced Study at the Norwegian Academy of Science and Letters in 
Oslo in the autumn of 2016.  The authors would like to warmly thank the Centre for hospitality and financial support.
The authors would like to warmly tbank the Centre for hospitality and financial support.
We thank Antonio Al\'arcon and Francisco J.\ L\'opez for many helpful discussions on this topic,
and Jaka Smrekar for his advice on topological issues concerning loop spaces. 


{\bibliographystyle{abbrv}
\bibliography{bib-FF-FL}}

\vskip 5mm

\noindent Franc Forstneri\v c

\noindent Faculty of Mathematics and Physics, University of Ljubljana, Jadranska 19, SI--1000 Ljubljana, Slovenia

\noindent Institute of Mathematics, Physics and Mechanics, Jadranska 19, SI--1000 Ljubljana, Slovenia

\noindent e-mail: {\tt franc.forstneric@fmf.uni-lj.si}

\vskip 0.5cm

\noindent Finnur L\'arusson

\noindent School of Mathematical Sciences, University of Adelaide, Adelaide SA 5005, Australia

\noindent e-mail:  {\tt finnur.larusson@adelaide.edu.au}

\end{document}